\newcommand\RE{\mathbb{R}}
\newcommand\NA{\mathbb {N}}
\renewcommand\div{\mathop{\rm{div}}\nolimits}
\newcommand\Span{\mathrm{span}}
\newcommand\Grad{\mathop{\boldsymbol\nabla}\nolimits}
\newcommand\Grads[1]{\varepsilon(#1)}
\newcommand\Huo{\HH^1_0(\Omega)}
\newcommand\Hub{\HH^1(\B)}
\newcommand\Ld{\L^2(\Omega)}
\newcommand\Ldo{L^2_0(\Omega)}
\newcommand\Ldb{\mathbf{L}^2(\B)}
\newcommand\Kt{\K_t}
\newcommand\Of{\Omega^f}
\newcommand\Os{\Omega^s}
\newcommand\OO{{0,\Omega}}
\newcommand\OB{{0,\B}}
\newcommand\Xb{\overline\X}
\newcommand\Jb{\overline J}
\newcommand\lf{\lambda_f}
\newcommand\ls{\lambda_s}
\renewcommand\c{\mathbf{c}}
\newcommand\n{\mathbf{n}}
\newcommand\s{\mathbf{s}}
\renewcommand\u{\mathbf{u}}
\renewcommand\v{\mathbf{v}}
\newcommand\w{\mathbf{w}}
\newcommand\x{\mathbf{x}}
\newcommand\y{\mathbf{y}}
\newcommand\z{\mathbf{z}}
\newcommand\X{\mathbf{X}}
\renewcommand\H{\mathbf{H}_0}
\newcommand\HH{\mathbf{H}}
\renewcommand\L{\mathbf{L}}
\newcommand\Y{\mathbf{Y}}
\newcommand\V{\mathbf{V}_0}
\newcommand\W{\mathbf{W}}
\newcommand\B{\mathcal B}
\newcommand\ssigma{\boldsymbol{\sigma}}
\newcommand\llambda{\boldsymbol{\lambda}}
\newcommand\mmu{{\boldsymbol{\mu}}}
\newcommand\ppsi{\boldsymbol{\psi}}
\newcommand\pphi{\boldsymbol{\varphi}}
\newcommand\cchi{\boldsymbol{\chi}}
\newcommand\aalpha{\boldsymbol{\alpha}}
\newcommand\bbeta{\boldsymbol{\beta}}
\newcommand{\F}{\mathbb{F}}
\newcommand{\I}{\mathbb{I}}
\newcommand\K{\mathbb{K}}
\renewcommand{\P}{\mathbb{P}}
\newcommand\ucX{\u(\X(\cdot,t),t)}
\newcommand\vcX{\v(\X(\cdot,t))}
\newcommand\ucXb{(\u\circ\Xb)(t)}
\newcommand\vcXb{\v\circ\Xb}
\newcommand\dr{\delta_\rho}
\newcommand\alm{\alpha^{(m)}}
\newcommand\bem{\beta^{(m)}}
\newcommand\aalm{\aalpha^{(m)}}
\newcommand\bbem{\bbeta^{(m)}}
\newcommand\weakstar{\stackrel{\ast}{\rightharpoonup}}
\theoremstyle{plain}
\newtheorem{thm}{Theorem}
\newtheorem{proposition}[thm]{Proposition}
\newtheorem{lemma}[thm]{Lemma}
\newtheorem{problem}{Problem}
\newtheorem{ass}{Assumption}
\theoremstyle{remark}
\begin{document}
\title[Existence and uniqueness for an FSI problem]
{On the existence and the uniqueness of the solution to a
fluid-structure interaction problem}
\author{Daniele Boffi}
\address{King Abdullah University of Science and Technology (KAUST), Saudi
Arabia and University of Pavia, Italy}
\email{daniele.boffi@kaust.edu.sa}
\urladdr{https://cemse.kaust.edu.sa/people/person/daniele-boffi}
\author{Lucia Gastaldi}
\address{DICATAM, Universit\`a di Brescia, Italy}
\email{lucia.gastaldi@unibs.it}
\urladdr{http://lucia-gastaldi.unibs.it}
\subjclass{65N30, 65N12, 74F10}

\begin{abstract}
In this paper we consider the linearized version of a system of partial
differential equations arising from a fluid-structure interaction model.
We prove the existence and the uniqueness of the solution under natural
regularity assumptions.
\end{abstract}
\maketitle
\section{Introduction}
\label{se:intro}

The mathematical analysis and the numerical approximation of problems
involving the interaction of fluids and solids are essential for the modeling
and simulation of a variety of applications related to engineering, physics,
and biology.

We consider a model presented in~\cite{BCG,BG} based on a fictitious domain
approach and the use of a distributed Lagrange multiplier. The considered
formulation is the evolution of a model originated from a finite element
approach of the immersed boundary method~\cite{BG_Bathe,BGHP,BCG_rho}. The
immersed boundary method has been introduced by Peskin and his collaborators
in several seminal papers~\cite{Peskin77,PeMcQ89,MQPe2000,PeAN} where the
interaction between the fluid and the solid was modeled by a suitably defined
Dirac delta function and the numerical approximation was performed by finite
differences.
One of the differences of the immersed boundary method with respect to other
possible approaches is that the discretization is performed by using two fixed
meshes: one for the fluid domain (artificially extended to include the
immersed solid) and one for the reference configuration of the solid. This
choice has the advantage that the computational meshes need not be updated at
each time step.  On other hand, the intersection between the fluid mesh and
the image of the solid mesh into the actual solid configuration needs to be
evaluated; in our approach the Lagrange multiplier is responsible for such
coupling.
The numerical analysis of the problem shows appealing properties related to
its stability~\cite{newren,BCG} and the numerical investigations demonstrate
the superiority of the finite element approach with respect to the original
finite difference scheme in terms of mass conservation.
Higher order time discretization has been investigated in~\cite{wolf}.

In this paper we address the study of the existence and uniqueness of the
continuous solution. The solution has four components: fluid velocity $\u$ and
pressure $p$ (extended into the solid region in the spirit of the fictitious
domain), the position of the solid domain inside the fluid, seen as a mapping
$\X$ from a reference configuration, and the Lagrange multiplier $\llambda$
supported in the solid reference domain that is used to enforce the coupling
between the solid and the fluid.
The problem is highly non linear; in particular the unknown $\X$ defines
mathematically the region occupied by the solid at a given time.
We consider a linearization of the problem with respect to the variable $\X$
and, for simplicity, we neglect the convective term of the Navier--Stokes
equations.

Our existence and uniqueness proof is based on a Faedo--Galerkin
approximation as done in~\cite{T} for the study of the Navier--Stokes
equations. We extend the results of~\cite{DGHL} where the coupling of the
incompressible Navier--Stokes equations with a linear elasticity model in a
fixed domain is considered.

Existence and uniqueness results for models related to fluid-structure
interactions have a limited but not empty occurrence in the literature.
In particular, some authors discussed the existence of weak solutions in the
case of a fluid containing rigid solids or elastic bodies whose behavior is
described by a finite number of 
modes~\cite{CSMT,DE1999,DE2000,DEGLT,F,GM,GLS,HS,Serre,TaTu,Ta}.
Other results are available for the existence of weak solutions in the case of
a fluid enclosed in a solid
membrane~\cite{Beirao,CDEG,MC2013a,MC2013b,MC2014,MC2015,MC2016} or interacting
with a plate~\cite{FO}; 
the typical example of application is the blood flowing in a vessel~\cite{QTV}.
The existence of the solution in the case of viscoelastic particles immersed
in a Newtonian fluid is discussed in~\cite{LW} using the Eulerian description
for both fluid and solid.
Local-in-time existence and uniqueness of strong solutions for a model
involving an elastic structure immersed in a fluid is analyzed
in~\cite{Coutand2005,Coutand2006,RaymondVanni2014,Boulakia2017,Boulakia2019}.

In the next section we recall the strong formulation of our model.
Section~\ref{se:FD_DLM} presents the fictitious domain approach together with
its variational formulation. The linearized problem is described in
Section~\ref{se:linearized} and the main existence and uniqueness result for
the velocity $\u$ and the position of the solid $\X$ is stated and proved in
Section~\ref{se:main}. Finally, Section~\ref{se:pl} is devoted to the
existence and uniqueness of the pressure $p$ and the multiplier $\llambda$.

\section{Setting of the problem}

In this section we recall the formulation of the fluid-structure interaction
problem presented in~\cite{BG}. We assume that we are given a two or three
dimensional Lipschitz and convex domain $\Omega$ which is occupied by a fluid
and a solid.  We denote by $\Of_t$ and $\Os_t$ the regions where the fluid and
the solid are respectively located at time $t$, so that $\Omega$ is the
interior of $\overline\Omega^f_t\cup\overline\Omega^s_t$. The regularity of
the two subdomains will be made more precise later on as a consequence of
Assumption~\ref{ass}. For simplicity we assume that
$\partial\Os_t\cup\partial\Omega$ is empty, that is the solid is immersed in
the fluid, and the moving interface $\partial\Of_t\cap\partial\Os_t$ is
denoted by $\Gamma_t$.

We denote by $\u$, $\ssigma$, and $\rho$ the velocity, stress tensor, and mass
density, respectively, and we use subscripts $f$ or $s$ to refer to the fluid
or to the solid. We assume that the densities $\rho_f$ and $\rho_s$ are
positive constants.

The following equations represent the strong form of the problem we are
interested in, corresponding to the interaction of an incompressible fluid and
an incompressible immersed elastic structure.

\begin{equation}
\label{eq:general}
\aligned
&\rho_f\dot\u_f=\div\ssigma_f  &&\text{in }\Of_t\\
&\div\u_f=0 &&\text{in }\Of_t\\
&\rho_s\dot\u_s=\div\ssigma_s &&\text{in }\Os_t\\
&\div\u_s=0 &&\text{in }\Os_t\\
&\u_f=\u_s &&\text{on }\Gamma_t\\
& \ssigma_f\n_f=-\ssigma_s\n_s &&\text{on }\Gamma_t.
\endaligned
\end{equation}
The following initial and boundary conditions are imposed on $\partial\Omega$.
\begin{equation}
\label{eq:ibc}
\aligned
&\u_f(0)=\u_{f0} && \text{on }\Of_0,\\
&\u_s(0)=\u_{s0} && \text{on }\Os_0,\\
&\u_f(t)=0 && \text{on }\partial\Omega.
\endaligned
\end{equation}
The fluid stress tensor is defined by the Navier--Stokes law as it is common
for Newtonian fluids
\begin{equation}
\label{eq:sigmaf}
\ssigma_f=-p_f\mathbb{I}+\nu_f\Grads{\u_f},
\end{equation}
where $\Grads\u=(1/2)\left(\Grad\u+(\Grad\u)^\top\right)$ is the symmetric
gradient and $\nu_f$ represents the viscosity of the fluid.

The solid domain $\Os_t$ is the image of a reference domain $\B=\Os_0$.
Let $\X(t):\B\to\Os_t$ be the mapping that associates to each point $\s\in\B$ a
point $\x\in\Os_t$. When it is needed in order to avoid confusion we will use
the notation $\X(\s,t)$ to denote the dependence on both space and time.
We assume that $\X$ is one to one and that, for all $t\in[0,T]$,
$\|\X(\s_1,t)-\X(\s_2,t)\|\ge\gamma\|\s_1-\s_2\|$ for all $\s_1,\s_2\in\B$ for
a positive constant $\gamma$.
In particular, $\X(t)$ is invertible with Lipschitz inverse. 

We denote by $\F=\Grad_s\X$ the deformation gradient; its determinant is
denoted by $|\F|$. We have that $|\F|$ is constant in time since the fluid and
the solid are incompressible; it is not restrictive to assume that
$\X_0(\s)=\s$ for $\s\in\B$, so that $|\F|=1$ for all $t$.

When dealing with moving domains it is essential to be precise with respect to
the Eulerian and Lagrangian descriptions of the involved quantities.
In~\eqref{eq:general}, we used the dot over the velocity in order to denote
the material time derivative. The Eulerian description of the fluid gives
$\dot\u_f=\partial\u_f/\partial t+\u_f\cdot\Grad\u_f$.
The spatial description of the material velocity in the solid, where the
Lagrangian representation is used, reads
\begin{equation}
\label{eq:materialvelocity}
\u_s(\x,t)=\frac{\partial\X(\s,t)}{\partial t}\Big|_{\x=\X(\s,t)}
\end{equation}
so that $\dot\u_s(\x,t)=\partial^2\X(\s,t)/\partial t^2|_{\x=\X(\s,t)}$.

Following what we did in~\cite{BGHP}, we consider a viscous-hyperelastic solid
structure and we define the Cauchy stress tensor as the sum of two
contributions $\ssigma_s=\ssigma_s^f+\ssigma_s^s$.
There is a fluid-like part of the stress
\begin{equation}
\label{eq:sigmas}
\ssigma_s^f=-p_s\mathbb{I}+\nu_s\Grads\u_s
\end{equation}
for a positive constant viscosity $\nu_s$ and an elastic part $\ssigma_s^s$.
The elastic part of the stress $\ssigma_s^s$ can be written in terms of the
first Piola--Kirchhoff stress tensor $\P$ with a change of variables from the
Eulerian to the Lagrangian framework:
\begin{equation}
\label{eq:Piola}
\P(\F(\s,t))=
|\F(\s,t)|\ssigma_s^s(\x,t)\F^{-\top}(\s,t)\qquad\text{for }\x=\X(\s,t).
\end{equation}
Following~\cite{BG} we are going to consider a linear dependence of $\P$ on
$\Grad_s\F$, namely
\begin{equation}
\label{eq:linearelasticity}
\P(\F)=\kappa\Grad_s\F.
\end{equation}

The following notation will be used throughout the paper.

If $D$ is a domain in $\RE^d$, we denote by
$W^{s,p}(D)$ the Sobolev space on $D$ ($s\in\RE$, $1\le p\le\infty$), and by
$\|\cdot\|_{s,p,D}$ its norm (see, for example,~\cite{Adams}). As usual we write
$H^s(D)=W^{s,2}(D)$ and omit $p$ in the norm and seminorm when
it is equal to $2$. Moreover, bold characters denote vector valued
functions and the corresponding functional spaces. The dual space of a Hilbert
space $X$ will be denoted with $X'$. The notation
$(\cdot,\cdot)_D$ stands for the scalar product in $L^2(D)$ and the duality
pairing is denoted by brackets $\langle\cdot,\cdot\rangle$.
The subscript indicating the domain is omitted if the domain is
$\Omega$, while we shall always use it for the reference domain
$\B$.
We will make use of the space $H^1_0(D)$ of functions in $H^1(D)$ with zero
trace on the boundary of $D$ and of its dual $H^{-1}(D)$. Moreover, $L^2_0(D)$
denotes the subspace of $L^2(D)$ of functions with zero mean value on $D$.

We denote by $\mathscr{D}(D)$ the space of $C^\infty$ functions
with compact support in $D$.
When $X$ is a Banach space, we denote by $L^p(0,T;X)$ ($1\le p\le\infty$) the
space of $L^p$-integrable functions from $(0,T)$ into $X$, which is a Banach
space with the norm
\[
\|v\|_{L^p(X)}=\left(\int_0^T\|v(t)\|_X^p\,dt\right)^{1/p}.
\]
Analogously, the space $C^m([0,T];X)$ denotes the space of functions from
$[0,T]$ to $X$ which are continuous up to the $m$-th derivative in $t$.

Finally, we are going to use the following spaces:
\begin{equation}
\label{eq:divfreespaces}
\aligned
&\mathscr{V}_0=\{\v\in\mathscr{D}(\Omega)^d: \div\v=0\}\\
&\H=\text{ the closure of }\mathscr{V}_0\text{ in }\Ldo\\
&\V=\text{ the closure of }\mathscr{V}_0\text{ in }\Huo.
\endaligned
\end{equation}
\section{Fictitious domain approach and Lagrange multiplier}
\label{se:FD_DLM}

We extend the fluid velocity and the pressure into the solid domain
by introducing new unknowns with the following meaning:
\begin{equation}
\label{eq:FDunknowns}
\u=\left\{
\begin{array}{ll}
\u_f&\text{ in } \Of_t\\
\u_s&\text{ in } \Os_t
\end{array}
\right.\qquad
p=\left\{
\begin{array}{ll}
p_f&\text{ in } \Of_t\\
p_s&\text{ in } \Os_t
\end{array}.
\right.
\end{equation}
The condition that the material velocity of the solid is equal to the
velocity of the fictitious fluid is expressed by
\begin{equation}
\label{eq:constraint}
\frac{\partial\X(\s,t)}{\partial t}=\u(\X(\s,t),t) \quad\text{for }\s\in\B.
\end{equation}
We introduce the following bilinear form:
\begin{equation}
\label{eq:defc}
\c(\mmu,\z)=\left(\Grad_s\mmu,\Grad_s\z\right)_\B+\left(\mmu,\z\right)_\B
\quad\forall\mmu,\ \z\in\Hub.
\end{equation}
It is obvious that for all $\mmu,\z\in\Hub$
\[
\aligned
&\c(\z,\z)=\|\z\|^2_{1,B}=\|\z\|^2_{0,\B}+\|\Grad_s\z\|^2_{0,B}\\
&\c(\mmu,\z)\le\|\mmu\|_{1,B}\|\z\|_{1,B}\\
&\c(\mmu,\z)=0 \text{ for all } \mmu\in\Hub \text{ implies } \z=0.
\endaligned
\]
System~\eqref{eq:general} can be formulated as follows.
\begin{problem}
\label{pb:pbvar}
Given $\u_0\in\Huo$, $\u_{s0}\in\mathbf{H}^1(\Os_0)$, and $\X_0(\s)=\s$ for
$\s\in\B$,
for almost every $t\in]0,T]$, find
$(\u(t),p(t))\in\Huo\times\Ldo$, $\X(t)\in\Hub$,
and $\llambda(t)\in\Hub$ such that it holds
\begin{subequations}
\begin{alignat}{2}
&\rho_f\frac d {dt}(\u(t),\v)+b(\u(t),\u(t),\v)+a(\u(t),\v)\qquad\notag&&\\
&\qquad-(\div\v,p(t))+\c(\llambda(t),\vcX)=0
   &&\quad\forall\v\in\Huo
     \label{eq:NS1_DLM}\\
&(\div\u(t),q)=0&&\quad\forall q\in\Ldo
     \label{eq:NS2_DLM}\\
&\dr\left(\frac{\partial^2\X}{\partial t^2}(t),\z\right)_{\B}
  +\kappa(\Grad_s\X(t),\Grad_s\z)_{\B}-\c(\llambda(t),\z)=0
  &&\quad\forall\z\in\Hub
     \label{eq:solid_DLM}\\
&\c\left(\mmu,\ucX-\frac{\partial\X}{\partial t}(t)\right)=0 
  &&\quad\forall\mmu\in \Hub
     \label{eq:ode_DLM}\\
&\u(0)=\u_0\quad\mbox{\rm in }\Omega,
     \label{eq:ciu_DLM}\\
&\X(0)=\X_0\quad\mbox{\rm in }\B,\qquad \frac{\partial\X}{\partial t}(0)=\u_{s0}
\quad\mbox{\rm in }\B.
     \label{eq:ciX_DLM}
\end{alignat}
\end{subequations}
\end{problem}
Here $\dr=\rho_s-\rho_f$ and
\[
\aligned
&a(\u,\v)=(\nu\Grads{\u},\Grads{\v})\quad \text{with }
\nu=\left\{\begin{array}{ll}
\nu_f&\text{in }\Of_t\\
\nu_s&\text{in }\Os_t
\end{array}\right.
\\
&b(\u,\v,\w)=
\frac{\rho_f}2\left((\u\cdot\Grad\v,\w)-(\u\cdot\Grad\w,\v)\right).
\endaligned
\]
We assume that $\nu\in L^\infty(\Omega)$ and that there exists a positive
constant $\nu_0>0$ such that $\nu\ge\nu_0>0$ in $\Omega$, hence the following
Korn's inequality holds true for all $\u\in\Huo$
\begin{equation}
\label{eq:Korn}
a(\u,\u)\ge \mathbf{k}\|\Grad\u\|^2_\OO.
\end{equation}

We add the following compatibility conditions for the initial velocity 
\begin{equation}
\label{eq:compatibility}
\div\u_0=0,\quad\text{and}\quad \u_0|_{\Os_0}=\u_{s0}.
\end{equation}
The second condition is related to the fact that we are assuming $\B=\Os_0$.

\section{Linearized problem}
\label{se:linearized}
We fix a function $\Xb$ which satisfies the following assumption.
\begin{ass}
\label{ass}
Let $\Xb\in C^{1}([0,T];\W^{1,\infty}(\B))$ be invertible with Lipschitz
inverse for all $t\in[0,T]$, with $\Xb(\s,0)=\s$ for $\s\in\B$.
In addition, we assume that
\begin{equation}
\label{eq:assX}
\Jb(t)=\textrm{det}(\Grad_s\Xb(t))=1 \quad\text{for all }t.
\end{equation}
\end{ass}

From now one we are going to neglect the convective term so that our problem
will read as follows.

\begin{problem}
\label{pb:pblin}
Given $\u_0\in\Huo$ and $\u_{s0}\in\Hub$,
for almost every $t\in]0,T]$ find
$(\u(t),p(t))\in\Huo\times\Ldo$, $\X(t)\in\Hub$,
and $\llambda(t)\in\Hub$ such that it holds
\begin{subequations}
\begin{alignat}{2}
&\rho_f\frac d {dt}(\u(t),\v)+a(\u(t),\v)-(\div\v,p(t))\qquad\notag&&\\
&\qquad+\c(\llambda(t),\vcXb)=0
   &&\quad\forall\v\in\Huo
     \label{eq:NS1_lin}\\
&(\div\u(t),q)=0&&\quad\forall q\in\Ldo
     \label{eq:NS2_lin}\\
&\dr\left(\frac{\partial^2\X}{\partial t^2}(t),\z\right)_{\B}
  +\kappa(\Grad_s\X(t),\Grad_s\z)_{\B}-\c(\llambda(t),\z)=0
  &&\quad\forall\z\in\Hub
     \label{eq:solid_lin}\\
&\c\left(\mmu,\ucXb-\frac{\partial\X}{\partial t}(t)\right)=0
  &&\quad\forall\mmu\in \Hub
     \label{eq:ode_lin}\\
&\u(0)=\u_0\quad\mbox{\rm in }\Omega,
     \label{eq:ciu_lin}\\
&\X(0)=\X_0\quad\mbox{\rm in }\B,\qquad \frac{\partial\X}{\partial t}(0)=\u_{s0}
\quad\mbox{\rm in }\B.
     \label{eq:ciX_lin}
\end{alignat}
\end{subequations}
\end{problem}

In the previous equations we used the notation $\vcXb=\v(\Xb(\cdot,t))$ and
$\ucXb=\u(\Xb(\cdot,t),t)$.

Let us split the second order in time Equation~\eqref{eq:solid_lin} into a
system of two differential equations of first order in time by introducing a
new unknown $\w=\frac{\partial\X}{\partial t}$. Then Problem~\ref{pb:pblin}
becomes:
\begin{problem}
\label{pb:linI}
Given $\u_0\in\Huo$ and $\u_{s0}\in \Hub$, for almost every $t\in]0,T]$ find
$(\u(t),p(t))\in\Huo\times\Ldo$, $(\X(t),\w(t))\in\Hub\times\Hub$,
and $\llambda(t)\in\Hub$ such that it holds
\begin{subequations}
\begin{alignat}{2}
&\rho_f\frac d {dt}(\u(t),\v)+a(\u(t),\v)-(\div\v,p(t))\qquad\notag&&\\
&\qquad+\c(\llambda(t),\vcXb)=0
   &&\quad\forall\v\in\Huo
     \label{eq:NS1_linI}\\
&(\div\u(t),q)=0&&\quad\forall q\in\Ldo
     \label{eq:NS2_linI}\\
&\dr\left(\frac{\partial\w}{\partial t}(t),\z\right)_{\B}
  +\kappa(\Grad_s\X(t),\Grad_s\z)_{\B}-\c(\llambda(t),\z)=0
  &&\quad\forall\z\in\Hub
     \label{eq:solid_linI}\\
&\left(\frac{\partial\X}{\partial t}(t),\y\right)_{\B}=(\w(t),\y)_{\B} 
  &&\quad\forall\y\in\Ldb
     \label{eq:vel_linI}\\
&\c\left(\mmu,\ucXb-\w(t)\right)=0 &&\quad\forall\mmu\in \Hub
     \label{eq:ode_linI}\\
&\u(0)=\u_0\quad\mbox{\rm in }\Omega,
     \label{eq:ciu_linI}\\
&\X(0)=\X_0\quad\mbox{\rm in }\B,\qquad \w(0)=\u_{s0}
\quad\mbox{\rm in }\B.
     \label{eq:ciX_linI}
\end{alignat}
\end{subequations}
\end{problem}
We set 
\begin{equation}
\label{eq:kernel}
\Kt=\{(\v,\z(t))\in\V\times\Hub: \c(\mmu,\vcXb(t)-\z(t))=0\ \forall\mmu\in\Hub\}.
\end{equation}
We observe that~\eqref{eq:compatibility} implies that
$(\u_0,\u_{s0})\in\mathbb{K}_0$.

Problem~\ref{pb:linI} is equivalent to the following one.
\begin{problem}
\label{pb:inK}
Given $(\u_0,\u_{s0})\in\K_0$, for almost every $t\in]0,T]$, find 
$(\u(t),\w(t))\in\Kt$ and $\X(t)\in\Hub$ such that
\begin{equation}
\label{eq:inK}
\aligned
&\rho_f\frac d {dt}(\u(t),\v)+a(\u(t),\v)
+\dr\left(\frac{\partial\w}{\partial t}(t),\z(t)\right)_{\B}
\\
&\qquad 
+\kappa(\Grad_s\X(t),\Grad_s\z(t))_{\B}=0&&\quad \forall(\v,\z(t))\in\Kt\\
&\left(\frac{\partial\X}{\partial t}(t),\y\right)_{\B}=(\w(t),\y)_{\B}
  &&\quad\forall\y\in\Ldb\\
&\u(0)=\u_0\quad\mbox{\rm in }\Omega,\qquad \w(0)=\u_{s0} \quad\mbox{\rm in }\B,
\\
&\X(0)=\X_0\quad\mbox{\rm in }\B,
\endaligned
\end{equation}
\end{problem}

In the following section we are going to prove existence and uniqueness of the
solution to Problem~\ref{pb:inK}. In the next section we will show
existence and existence for the pressure $p$ and the multipier $\lambda$ as
well.

\section{Existence and uniqueness}
\label{se:main}

We start this section by showing existence and uniqueness of the solution to
Problem~\ref{pb:inK} by following the Galerkin approximation technique used
in~\cite[Chapt.III.1]{T}. The proof of the next theorem will be obtained in
several steps.
\begin{thm}
\label{th:existence}
We set $\X_0(\s)=\s$ for $\s\in\B$.
Let $\Xb\in C^1([0,T];W^{1,\infty}(\B))$ be such that Assumption~\ref{ass} is
satisfied. Then, given $\u_0\in\V$ and $\u_{s0}\in\Hub$
satisfying the compatibility condition~\eqref{eq:compatibility}, for a.e.
$t\in(0,T)$ there exist $(\u(t),\w(t))\in\Kt$ and $\X(t)\in\Hub$ satisfying
Problem~\ref{pb:inK} and
\[
\aligned
&\u\in L^\infty(0,T;\H)\cap L^2(0,T;\V)\\
&\w\in L^\infty(0,T;\Ldb)\cap L^2(0,T;\Hub)\\
&\X\in L^\infty(0,T;\Hub)\quad \text{with } 
\frac{\partial\X}{\partial t}\in L^\infty(0,T;\Ldb)\cap L^2(0,T;\Hub).
\endaligned
\]
\end{thm}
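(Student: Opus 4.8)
The plan is to adapt the Faedo--Galerkin scheme of \cite[Chap.~III]{T} to the constrained system \eqref{eq:inK}. The first point to dispose of is the $t$-dependence of the constraint space $\Kt$, which prevents using a fixed basis for the pair $(\u,\w)$. We exploit instead that, by the third property of $\c$ in \eqref{eq:defc}, the relation $\c(\mmu,\vcXb(t)-\z)=0$ for all $\mmu\in\Hub$ simply forces $\z=\v(\Xb(\cdot,t))$; hence $\Kt$ is the graph over $\V$ of the bounded linear map $\v\mapsto\v(\Xb(\cdot,t))$. We fix a basis $\{\v_i\}_{i\ge1}$ of $\V$ made of elements of $\mathscr{V}_0$ and orthonormal in $\H$, and look for $\u_m(t)=\sum_{i=1}^m g_i^{(m)}(t)\v_i$, together with $\w_m(t)=\u_m(\Xb(\cdot,t),t)$, so that $(\u_m(t),\w_m(t))\in\Kt$ by construction, and $\X_m(t)=\X_0+\int_0^t\w_m(\tau)\,d\tau$, so that $\partial\X_m/\partial t=\w_m$ and $\X_m(0)=\X_0$. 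Imposing the first equation of \eqref{eq:inK} with the test pairs $(\v_j,\v_j(\Xb(\cdot,t)))$, $j=1,\dots,m$, gives a linear Volterra integro-differential system for $g^{(m)}=(g_1^{(m)},\dots,g_m^{(m)})$ whose mass matrix $M_{ij}(t)=\rho_f(\v_i,\v_j)+\dr(\v_i(\Xb(t)),\v_j(\Xb(t)))_\B$ equals, after the change of variables $\x=\Xb(\s,t)$ with $\Jb\equiv1$, the matrix $\rho_f(\v_i,\v_j)_{\Of_t}+\rho_s(\v_i,\v_j)_{\Os_t}$, hence it is symmetric and positive definite. Since $\Xb\in C^1([0,T];\W^{1,\infty}(\B))$, all coefficients are continuous in $t$ and $M(t)^{-1}$ is bounded on $[0,T]$, so Cauchy--Lipschitz (or the theory of linear Volterra equations) yields a unique $g^{(m)}\in C^1([0,T])^m$ on the whole interval. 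As data we take $\u_m(0)=P_m\u_0\to\u_0$ in $\V$ ($P_m$ the $\V$-projection onto $\Span\{\v_1,\dots,\v_m\}$); then, since $\Xb(\cdot,0)$ is the identity on $\B=\Os_0$, $\w_m(0)=\u_m(0)|_{\Os_0}\to\u_0|_{\Os_0}=\u_{s0}$ by the compatibility condition \eqref{eq:compatibility}.

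The second step is the a priori estimate. Testing with $(\u_m(t),\w_m(t))\in\Kt$ (i.e.\ multiplying the $j$-th Galerkin equation by $g_j^{(m)}(t)$ and summing), and using $\partial(\Grad_s\X_m)/\partial t=\Grad_s\w_m$, we obtain the energy identity
\[
\frac{d}{dt}\left(\frac{\rho_f}{2}\|\u_m\|_{0,\Omega}^2+\frac{\dr}{2}\|\w_m\|_{0,\B}^2+\frac{\kappa}{2}\|\Grad_s\X_m\|_{0,\B}^2\right)+a(\u_m,\u_m)=0 .
\]
Since $\w_m=\u_m(\Xb(\cdot,t))$ and $\Jb\equiv1$ we have $\|\w_m\|_{0,\B}^2=\|\u_m\|_{0,\Os_t}^2$, so the bracket equals $\tfrac{\rho_f}{2}\|\u_m\|_{0,\Of_t}^2+\tfrac{\rho_s}{2}\|\u_m\|_{0,\Os_t}^2+\tfrac{\kappa}{2}\|\Grad_s\X_m\|_{0,\B}^2\ge0$, irrespective of the sign of $\dr$. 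Integrating in time, using Korn's inequality \eqref{eq:Korn} and Poincar\'e's inequality, and recalling that the initial energy is uniformly bounded, we deduce that $\u_m$ is bounded in $L^\infty(0,T;\H)\cap L^2(0,T;\V)$, $\w_m$ in $L^\infty(0,T;\Ldb)$, and $\X_m$ in $L^\infty(0,T;\Hub)$ with $\partial\X_m/\partial t=\w_m$ bounded in $L^\infty(0,T;\Ldb)$. Finally, the chain rule $\Grad_s\w_m=((\Grad\u_m)\circ\Xb)\,\Grad_s\Xb$ together with $\Xb\in C^0([0,T];\W^{1,\infty}(\B))$ and $\Jb\equiv1$ gives $\|\Grad_s\w_m(t)\|_{0,\B}\le C\|\Grad\u_m(t)\|_{0,\Omega}$, so $\w_m$, and hence $\partial\X_m/\partial t$, is also bounded in $L^2(0,T;\Hub)$. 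All bounds are uniform in $m$.

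The third step is the passage to the limit, which is purely a matter of weak compactness since the convective term has been dropped (no Aubin--Lions argument is needed). Along a subsequence, $\u_m\weakstar\u$ in $L^\infty(0,T;\H)$ and $\u_m\rightharpoonup\u$ in $L^2(0,T;\V)$; $\w_m\weakstar\w$ in $L^\infty(0,T;\Ldb)$ and $\w_m\rightharpoonup\w$ in $L^2(0,T;\Hub)$; $\X_m\weakstar\X$ in $L^\infty(0,T;\Hub)$ with $\partial\X_m/\partial t\weakstar\partial\X/\partial t$ in $L^\infty(0,T;\Ldb)$ and weakly in $L^2(0,T;\Hub)$. The map $\u\mapsto[t\mapsto\u(t)(\Xb(\cdot,t))]$ is bounded and linear from $L^2(0,T;\Huo)$ to $L^2(0,T;\Hub)$, hence weakly continuous, and since $\w_m=\u_m(\Xb(\cdot,t))$ this forces $\w=\u(\Xb(\cdot,t))$, so $(\u(t),\w(t))\in\Kt$ for a.e.\ $t$ and $\partial\X/\partial t=\w$. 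Multiplying the $j$-th Galerkin equation by $\psi\in\mathscr{D}(0,T)$, integrating over $(0,T)$ and integrating by parts in time to move the derivatives off $\u_m$ and $\w_m$ (legitimate since $t\mapsto\v_j(\Xb(\cdot,t))$ is $C^1$ with values in $\Hub$, its time derivative being a fixed continuous test function), every term passes to the limit against the fixed smooth data; by density of $\Span\{\v_j\}$ in $\V$ this yields the first equation of \eqref{eq:inK} for all $(\v,\z(t))\in\Kt$, and since the resulting time derivative of $t\mapsto\rho_f(\u(t),\v)+\dr(\w(t),\vcXb(t))_\B$ lies in $L^2(0,T)$, the identity holds for a.e.\ $t$. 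The second relation of \eqref{eq:inK} and $\X(0)=\X_0$ follow from $\partial\X/\partial t=\w$ and $\X_m(0)=\X_0$ by passing to the limit in $H^1(0,T;\Ldb)$. The initial conditions $\u(0)=\u_0$ and $\w(0)=\u_{s0}$ are obtained by the classical device of \cite[Chap.~III]{T}: one first establishes weak-in-time continuity of the momentum functional $t\mapsto\rho_f(\u(t),\cdot)+\dr(\w(t),(\cdot)(\Xb(\cdot,t)))_\B$, so that traces at $t=0$ make sense, then tests with $\psi\in C^1([0,T])$, $\psi(T)=0$, $\psi(0)=1$, integrates by parts, compares with the a.e.\ equation already proved, and uses $\u_m(0)\to\u_0$ and $\w_m(0)\to\u_{s0}$.

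The only genuinely new difficulty with respect to \cite{T,DGHL} is the time dependence of the constraint space $\Kt$: it forces one to work with the $t$-dependent test pairs $(\v_j,\v_j(\Xb(\cdot,t)))$, which complicates the integration by parts in time and the verification that the Galerkin mass matrix is uniformly positive definite. The latter is where the incompressibility hypothesis $\Jb\equiv1$ enters essentially, both in making $M(t)$ coercive and in controlling, via the constraint, the a priori indefinite term $\tfrac{\dr}{2}\|\w_m\|_{0,\B}^2$ in the energy; recovering the initial conditions for $\u$ and $\w$ is the other standard delicate point.
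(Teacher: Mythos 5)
Your argument is correct and follows the same overall Faedo--Galerkin skeleton as the paper (finite-dimensional approximation, energy estimate, weak-$*$ compactness, Temam's device for the initial data), but the construction of the approximating system is genuinely different. The paper builds its basis of $\Kt$ from the eigenfunctions $\ppsi_j$ of \eqref{eq:eigpsi}, introduces a second eigenbasis $\cchi_j$ of \eqref{eq:eigchi} to expand $\X^m$, and must then expand $\pphi_j(t)=\ppsi_j\circ\Xb(t)$ in the $\cchi_r$ (Lemma~\ref{le:serie}) and control the resulting infinite-series matrices (Lemma~\ref{le:matrici}) before arriving at a first-order ODE system in $(\aalpha^{(m)},\bbeta^{(m)})$. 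You instead use that $\Kt$ is the graph of the bounded map $\v\mapsto\v\circ\Xb(t)$, keep a single generic basis of $\V$, and define $\X_m$ by direct time integration of $\w_m$; this turns the Galerkin system into a linear Volterra integro-differential equation in the coefficients $g^{(m)}$ alone, makes $\partial\X_m/\partial t=\w_m$ exact rather than holding only in projection onto $\Span\{\cchi_1,\dots,\cchi_m\}$, and bypasses Lemmas~\ref{le:serie} and~\ref{le:matrici} entirely. Your treatment of the mass matrix is also, in one respect, stronger: the change of variables $\x=\Xb(\s,t)$ with $\Jb\equiv1$ identifies $\rho_f(\v_i,\v_j)+\dr(\v_i\circ\Xb,\v_j\circ\Xb)_\B$ as the Gram matrix of the $\rho$-weighted $\L^2(\Omega)$ inner product, hence positive definite for any positive densities, whereas the paper's route (prove $C(t)\succeq0$ and conclude that $\rho_f\I_m+\dr C(t)$ is invertible) implicitly requires $\dr\ge0$; the same remark applies to your rewriting of the energy bracket $\tfrac{\rho_f}2\|\u_m\|^2_\OO+\tfrac{\dr}2\|\w_m\|^2_\OB$ as a weighted norm. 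The a priori bounds, the identification of the limit constraint through weak continuity of $\v\mapsto\v\circ\Xb$, and the recovery of $\u(0)=\u_0$ and $\w(0)=\u_{s0}$ via the compatibility condition \eqref{eq:compatibility} coincide with the paper's Steps 1--7, so no gap remains, although the final disentangling of the two initial conditions from the combined identity (the paper's \eqref{eq:initialcond}) is left implicit in your sketch and deserves the two lines the paper devotes to it.
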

\subsection{Basis in $\Kt$}
We introduce a basis in $\Kt$ that will be used for the Galerkin approximation
of our problem.
Let $\ppsi_j$ ($j\in\NA$) be the complete set of eigenfunctions for the
eigenvalue problem: find $\lf\in\RE$ and $\ppsi\in\V$ with $\ppsi\ne0$
such that
\begin{equation}
\label{eq:eigpsi}
a(\ppsi,\v)=\lf(\ppsi,\v)\qquad\forall \v\in\V.
\end{equation}
It is well known that the eigenvalues are positive and can be enumerated 
in an increasing sequence going to $+\infty$. The associated eigenfunctions
$\{\ppsi_j\}_{j=1}^\infty$ are orthogonal with respect to
the scalar product in $\mathbf{L}^2(\Omega)$ and to the bilinear form
$a(\cdot,\cdot)$. We normalize them with respect to the $\mathbf{L}^2(\Omega)$
norm, so that $\|\ppsi_j\|_{0,\Omega}=1$ for all $j\in\NA$.

Moreover, let $\cchi_j$ ($j\in\NA$) be the complete set of
eigenfunctions for the eigenvalue problem: find $\ls\in\RE$ and $\cchi\in\Hub$
with $\cchi\ne0$ such that
\begin{equation}
\label{eq:eigchi}
\c(\mmu,\cchi)=\ls(\cchi,\mmu)_{\B}\qquad\forall\mmu\in\Hub.
\end{equation}
Also the eigenvalues of~\eqref{eq:eigchi} are positive and can be enumerated
in increasing sequence going to $+\infty$.
We have that $\{\cchi_j\}_{j=1}^\infty$ are orthogonal we respect to
the scalar product in $\Ldb$ and to the bilinear form $\c(\cdot,\cdot)$. We
normalize them with respect to $\c$ so that $\c(\cchi_j,\cchi_j)=1$ for all
$j\in\NA$.

\begin{proposition}
\label{pr:base_phi}
For $j\in\NA$ and $t\in[0,T]$, let us set $\pphi_j(t)=\ppsi_j\circ\Xb(t)\in\Hub$.
Then, for each $t\in[0,T]$, $\{\pphi_j(t)\}_{j=1}^\infty$ is a basis of
$\Hub$.
\end{proposition}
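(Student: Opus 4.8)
The plan is to show that $\{\pphi_j(t)\}_{j=1}^\infty$ is both complete in $\Hub$ and (after a suitable argument) forms a topological basis, by transporting the corresponding properties of $\{\ppsi_j\}$ through the change of variables induced by $\Xb(t)$. The key observation is that, by Assumption~\ref{ass}, for each fixed $t$ the map $\Xb(t):\B\to\Xb(\B,t)$ is bi-Lipschitz with $\Jb(t)=1$, so composition with $\Xb(t)$ is a bounded linear isomorphism between Sobolev spaces on $\B$ and on $\Xb(\B,t)$. More precisely, I would first record that the operator $T_t:\v\mapsto\v\circ\Xb(t)$ maps $\mathbf{H}^1$ boundedly with a bounded inverse $\w\mapsto\w\circ\Xb(t)^{-1}$, with operator norms depending only on the Lipschitz constants of $\Xb(t)$ and $\Xb(t)^{-1}$ (which are uniform in $t$ by the $C^1([0,T];W^{1,\infty})$ regularity and invertibility hypotheses). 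Since $\{\ppsi_j\}$ is an orthogonal basis of $\V\subset\Huo$ and in particular is complete in $\V$, and since $\V$ is dense in $\Huo$, I need to be a little careful: $\{\pphi_j(t)\}$ will be complete in the image $T_t(\Huo)$, which I then need to identify with (or show dense in) $\Hub$.

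The main structural point is therefore the following. The functions $\ppsi_j$ live in $\V$, the divergence-free subspace, so $\Span\{\ppsi_j\}$ is dense only in $\V$, not in all of $\Huo$; yet $\pphi_j(t)=\ppsi_j\circ\Xb(t)$ need not be divergence-free on $\B$ (the change of variables does not preserve $\div$ in general, even when $\Jb=1$, unless $\Grad_s\Xb$ is orthogonal). So completeness of $\{\pphi_j(t)\}$ in $\Hub$ is plausible precisely because the constraint defining $\V$ is destroyed by the transformation. Concretely, I would argue: $\Span\{\ppsi_j\}$ is dense in $\V$ in the $\mathbf{H}^1(\Omega)$ norm; I want to upgrade this to density of $\Span\{\ppsi_j\}$ in $\Huo$ after composition. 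The honest route is: given $\z\in\Hub$, set $\v=\z\circ\Xb(t)^{-1}\in\Huo$ (using the bi-Lipschitz isomorphism $T_t^{-1}:\Hub\to\Huo$, noting $\Xb(\B,t)$ and $\Omega$ — here one needs $\z$ extended by zero or the observation that we only need local density near $\B$). Hmm — this is the delicate point, since $\Xb(\B,t)$ is a proper subdomain of $\Omega$, so $T_t$ does not map $\Huo$ onto $\Hub$ directly. I would instead use that $\ppsi_j|_{\Os_0}$, or rather $\ppsi_j\circ\Xb(t)$ restricted to $\B$, and invoke that the restriction of divergence-free fields in $\Huo$ to an interior subdomain is dense in $\mathbf{H}^1$ of that subdomain — this is a known density fact (smooth divergence-free fields on $\Omega$ restrict densely, because any $\mathbf{H}^1$ field on $\B$ can be corrected, inside $\B$, by a divergence-free field matching it, using the solvability of $\div$ with an $L^2_0$ source and a bit of extension). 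Once that density is in hand, applying the bounded isomorphism $T_t$ transfers it to density of $\Span\{\pphi_j(t)\}$ in $\Hub$.

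For the ordering of steps: (i) state and prove the uniform bi-Lipschitz isomorphism property of $T_t$ on $\mathbf{H}^1$ spaces, with $t$-uniform bounds, citing Assumption~\ref{ass}; (ii) recall that $\{\ppsi_j\}$ is complete in $\V$ and that the restrictions to an interior subdomain of $\V$-fields are dense in $\mathbf{H}^1$ of that subdomain; (iii) compose and conclude completeness of $\{\pphi_j(t)\}$ in $\Hub$; (iv) if "basis" is meant in the stronger sense of a Schauder/unconditional basis with unique expansions, note that $\{\ppsi_j\}$ is an orthogonal (hence unconditional Schauder) basis of $\V$ for the $\mathbf{H}^1$ inner product and that a bounded linear isomorphism carries Schauder bases to Schauder bases, so $\{\pphi_j(t)\}$ is a Schauder basis of the closed subspace $T_t(\V)\subset\Hub$; combined with (iii), $T_t(\V)$ is dense, and since it is also the image of a closed subspace under an isomorphism it is closed, hence $T_t(\V)=\Hub$ and $\{\pphi_j(t)\}$ is a Schauder basis of all of $\Hub$. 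The step I expect to be the genuine obstacle is (ii)–(iii): reconciling the divergence-free constraint on the $\ppsi_j$ with the need for completeness in the \emph{unconstrained} space $\Hub$, i.e.\ making rigorous that composition with the non-conformal map $\Xb(t)$ exactly "uses up" the codimension of $\V$ in $\Huo$ — or, more carefully, that the restriction-to-$\B$ of divergence-free fields is dense in $\mathbf{H}^1(\B)$ so that no completeness is lost. Everything else (boundedness of the composition operators, preservation of Schauder-basis structure under isomorphisms, uniformity in $t$) is routine given Assumption~\ref{ass}.
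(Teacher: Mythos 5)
Your overall route is the same as the paper's: pull $\z\in\Hub$ back to $\v_z=\z\circ\Xb(t)^{-1}\in\HH^1(\Os_t)$ via the bi-Lipschitz map $\Xb(t)$, reduce the claim to a completeness statement for $\{\ppsi_j\}$ on the subdomain $\Os_t=\Xb(\B,t)$, and transport back with the bounded composition operator $T_t$. You have also put your finger on exactly the right difficulty, namely that the $\ppsi_j$ are complete only in the divergence-free space $\V$, not in $\Huo$ --- a point the paper's own proof passes over in silence (it extends $\v_z$ to some $\tilde\v_z\in\Huo$ and then simply asserts that $\tilde\v_z=\sum_j\alpha_j\ppsi_j$, which presupposes $\tilde\v_z\in\V$).

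However, the lemma you invoke to close this gap --- that restrictions to an interior subdomain of divergence-free fields in $\Huo$ are dense in $\HH^1$ of that subdomain --- is false, and this is a genuine gap rather than a technicality. The divergence is a bounded operator from $\HH^1(\Os_t)$ to $L^2(\Os_t)$, so the $\HH^1(\Os_t)$-closure of $\{\v|_{\Os_t}:\v\in\V\}$, and hence of $\Span\{\ppsi_j|_{\Os_t}\}$, is contained in the proper closed subspace $\{\w\in\HH^1(\Os_t):\div\w=0\}$; a field such as $\w(\x)=\x$, with $\div\w=d$, lies at positive $\HH^1$-distance from it. Your sketched justification (correct an arbitrary $\HH^1(\B)$ field by solving $\div$ with an $L^2_0$ source) produces a divergence-free field that differs from the target by a non-small amount whenever the target has nonzero divergence; it does not yield approximation. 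Since $T_t$ is an isomorphism of $\HH^1(\Os_t)$ onto $\Hub$, it carries this proper closed subspace to a proper closed subspace of $\Hub$, so completeness of $\{\pphi_j(t)\}$ in all of $\Hub$ cannot be reached along these lines: the obstruction you identified in steps (ii)--(iii) is real and your proposal does not remove it. (Your step (iv) also treats $\v\mapsto\v\circ\Xb(t)$ as an isomorphism from $\V$ onto its image, but as a map from functions on $\Omega$ to functions on $\B$ it is not injective, so the argument that the image of a closed subspace is closed does not apply as stated.)
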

\begin{proof}
Given $\z\in\Hub$ we will show that it can be written as a combination of
the $\{\pphi_j(t)\}$'s. Thanks to the assumptions on $\Xb$, we have that
$\v_z(t)=\z\circ\Xb(t)^{-1}\in\HH^1(\overline{\Os_t})$ where
$\overline{\Os_t}=\Xb(\B,t)$. Let $\tilde\v_z(t)\in\Huo$ be an extension of
$\v_z(t)$ to $\Omega$, so that $\tilde\v_z(t)|_{\overline{\Os_t}}=\v_z(t)$. Then
we can write $\tilde\v_z(t)$ in terms of the basis functions $\ppsi_j$, that is
\[
\tilde\v_z(t)=\sum_{j=1}^\infty\alpha_j(t)\ppsi_j.
\]
By construction we have that $\tilde\v_z(\Xb(\cdot,t),t)=\z\in\Hub$, hence we
obtain
\[
\z=\tilde\v_z(\Xb(\cdot,t),t)=\sum_{j=1}^\infty\alpha_j(t)\ppsi_j\circ\Xb
=\sum_{j=1}^\infty\alpha_j(t)\pphi_j(t).
\]

\end{proof} 
As a consequence of the previous proposition, a basis in $\Kt$ is given by
$\{(\ppsi_j,\pphi_j(t))\}_{j=1}^\infty\}$.

\subsection{Galerkin approximation}
We introduce a Galerkin approximation of the solution of Problem~\ref{pb:inK}.
Let us consider $\V^m=\Span(\ppsi_1,\dots,\ppsi_m)$,
$\W^m(t)=\Span(\pphi_1(t),\dots,\pphi_m(t))$, and
$\HH^m=\Span(\cchi_1,\dots,\cchi_m)$.
We define a subspace $\Kt^m$ of $\Kt$ generated by the first $m$
basis functions in $\Kt$ as follows
\begin{equation}
\label{eq:Km}
\Kt^m=\{(\v,\z(t))\in\V^m\times\W^m: \c(\pphi_i(t),\vcXb(t)-\z(t))=0\text{ for }
i=1,\dots,m\}
\end{equation}
It is clear that if $(\v,\z(t))\in\Kt^m$ then
\[
\v=\sum_{j=1}^m\alpha_j\ppsi_j\qquad \z(t)=\sum_{j=1}^m\alpha_j\pphi_j(t)
\]
for the same coefficients $\{\alpha_j\}$.
The Galerkin approximation of the solution of Problem~\ref{pb:inK} is given by
\begin{equation}
\label{eq:Galapprox}
\aligned
&\u^m(t)=\sum_{j=1}^m\alm_j(t)\ppsi_j,\quad
\w^m(t)=\sum_{j=1}^m\alm_j(t)\pphi_j(t), \quad\\
&\X^m(t)=\sum_{j=1}^m\bem_j(t)\cchi_j
\endaligned
\end{equation}
such that
\begin{equation}
\label{eq:inKm}
\aligned
&\rho_f\frac d {dt}(\u^m(t),\v)+a(\u^m(t),\v)
+\dr\left(\frac{\partial\w^m}{\partial t}(t),\z(t)\right)_{\B}\\
&\qquad 
+\kappa(\Grad_s\X^m(t),\Grad_s\z(t))_{\B}=0&& \forall(\v,\z(t))\in\Kt^m\\
&\left(\frac{\partial\X^m}{\partial t}(t),\y\right)_{\B}=(\w^m(t),\y)_{\B}
  &&\forall\y\in\HH^m\\
&\u^m(0)=\u^m_0\ \text{ in }\Omega,\qquad
\w^m(0)=\u^m_{s0} \ \text{ in }\B\\
&\X^m(\s,0)=\X^m_0\ \text{ for }\s\in\B.
\endaligned
\end{equation}
The initial conditions in~\eqref{eq:inKm} are obtained by projecting the
initial data, that is 
\begin{equation}
\label{eq:inKm_ini}
\aligned
&\u^m_0=\sum_{j=1}^m\alm_{0j}\ppsi_j\quad \text{with }
\alm_{0j}=\frac{(\u_0,\ppsi_j)}{(\ppsi_j,\ppsi_j)}\\
&\u^m_{s0}=\sum_{j=1}^m\alm_{0j}\pphi_j(0)\\
&\X^m_0=\sum_{j=1}^m\bem_{0j}\cchi_j\quad\text{with }
\bem_{0j}=\frac{(\s,\cchi_j)}{(\cchi_j,\cchi_j)},
\endaligned
\end{equation}
where we have taken into account the compatibility
assumption~\eqref{eq:compatibility}.

Using~\eqref{eq:Galapprox} in~\eqref{eq:inKm} we obtain for
$(\v,\z)=(\ppsi_i,\pphi_i(t))$ and $\y=\cchi_i$ the following system: 
\begin{equation}
\label{eq:systemm}
\aligned
&\rho_f\sum_{j=1}^m\alpha_j'(t)(\ppsi_j,\ppsi_i)
+\sum_{j=1}^m\alpha_j(t)a(\ppsi_j,\ppsi_i)\\
&\quad
+\dr\left(\sum_{j=1}^m\left(\alpha_j'(t)\pphi_j(t)
+\alpha_j(t)\pphi'_j(t)\right),\pphi_i(t)\right)_{\B}\\
&\quad
+\kappa\sum_{j=1}^m\beta_j(t)(\Grad_s\cchi_j,\Grad_s\pphi_i(t))_{\B}=0\\
& \sum_{j=1}^m\beta_j'(t)(\cchi_j,\cchi_i)_{\B}=
\sum_{j=1}^m\alpha_j(t)(\pphi_j(t),\cchi_i)_{\B},
\endaligned
\end{equation}
where we omitted the superscript $m$ in order to simplify the notation.

Thanks Proposition~\ref{pr:base_phi}, $\pphi_j(t)\in\Hub$ 
can be written in terms of the basis $\{\cchi_i\}_{i=1}^\infty$ as follows
\begin{equation}
\label{eq:phi_chi}
\pphi_j(t)=\sum_{r=1}^\infty\delta_{jr}(t)\cchi_r\qquad\text{with }
\delta_{jr}(t)=\c(\pphi_j(t),\cchi_r),
\end{equation}
therefore $\delta_{jr}(t)$ inherits the regularity in time of $\pphi_j(t)$. 
\begin{lemma}
\label{le:serie}
Under Assumption~\ref{ass}, we have for $j\in\NA$
\begin{equation}
\label{eq:serie}
\aligned
&\bigg\|\sum_{r=1}^\infty \delta_{jr}^2(t)c_r\bigg\|_{L^\infty(0,T)}\le C
\|\Xb\|^2_{L^\infty(L^\infty(\B))}\|\ppsi_j\|^2_\OO\\
&\bigg\|\sum_{r=1}^\infty \delta_{jr}^2(t)\bigg\|_{L^\infty(0,T)}
\le C
\|\Xb\|^2_{L^\infty(\W^{1,\infty}(\B))}\|\ppsi_j\|^2_{1,\Omega}
\\
&\bigg\|\sum_{r=1}^\infty (\delta_{jr}'(t))^2c_r\bigg\|_{L^\infty(0,T)}\le C
\|\Xb\|^2_{W^{1,\infty}(L^\infty(\B))}\|\ppsi_j\|^2_{1,\Omega},
\endaligned
\end{equation}
where $c_r=\|\cchi_r\|^2_{\OB}=\frac1{\lambda_{sr}}$ (see~\eqref{eq:eigchi}).
\end{lemma}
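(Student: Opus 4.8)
The plan is to estimate each of the three series directly from the definition $\delta_{jr}(t)=\c(\pphi_j(t),\cchi_r)$ together with the spectral properties of the eigenfunctions $\cchi_r$. The starting observation is that, since $\c(\cchi_r,\cchi_r)=1$ and $\c(\mmu,\cchi_r)=\lambda_{sr}(\cchi_r,\mmu)_\B$, we have for any $\mmu\in\Hub$ the Parseval-type identities
\[
\sum_{r=1}^\infty \big(\c(\mmu,\cchi_r)\big)^2 = \c(\mmu,\mmu)=\|\mmu\|_{1,\B}^2,
\qquad
\sum_{r=1}^\infty \frac{1}{\lambda_{sr}}\big(\c(\mmu,\cchi_r)\big)^2
= \sum_{r=1}^\infty (\mmu,\cchi_r/\sqrt{c_r})_\B^2 \cdot c_r \cdot \frac{1}{\lambda_{sr}}\cdot\lambda_{sr}
= \|\mmu\|_{0,\B}^2,
\]
the second because $\{\cchi_r/\sqrt{c_r}\}$ is an $\Ldb$-orthonormal basis and $\c(\mmu,\cchi_r)=\lambda_{sr}(\mmu,\cchi_r)_\B=(\mmu,\cchi_r/\sqrt{c_r})_\B/\sqrt{c_r}$, so that $c_r(\c(\mmu,\cchi_r))^2=(\mmu,\cchi_r/\sqrt{c_r})_\B^2$. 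Thus the three quantities to be bounded are, respectively (taking a supremum over $t$),
\[
\sum_r \delta_{jr}^2 c_r = \|\pphi_j(t)\|_{0,\B}^2,\qquad
\sum_r \delta_{jr}^2 = \|\pphi_j(t)\|_{1,\B}^2,\qquad
\sum_r (\delta_{jr}')^2 c_r = \Big\|\tfrac{\partial}{\partial t}\pphi_j(t)\Big\|_{0,\B}^2,
\]
where the last identity uses that differentiation in $t$ commutes with the expansion coefficients (legitimate because $\pphi_j\in C^1([0,T];\Hub)$ by Assumption~\ref{ass}) and the same Parseval identity applied to $\partial_t\pphi_j\in\Ldb$.

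The remaining work is then three change-of-variables estimates on $\B$. For the first, $\|\pphi_j(t)\|_{0,\B}^2=\int_\B|\ppsi_j(\Xb(\s,t))|^2\,d\s$; since $\Jb(t)=1$ by \eqref{eq:assX}, the change of variables $\x=\Xb(\s,t)$ gives $\|\pphi_j(t)\|_{0,\B}^2=\|\ppsi_j\|_{0,\Os_t}^2\le\|\ppsi_j\|_\OO^2$, which is even sharper than claimed; the factor $\|\Xb\|^2_{L^\infty(L^\infty(\B))}$ in the statement is a harmless over-estimate. For the second, by the chain rule $\Grad_s\pphi_j(t)=(\Grad\ppsi_j)(\Xb(t))\,\Grad_s\Xb(t)$, so $|\Grad_s\pphi_j(t)|\le|\Grad\ppsi_j(\Xb(t))|\,\|\Grad_s\Xb(t)\|_{L^\infty(\B)}$ pointwise, and again using $\Jb=1$ to pull the integral back to $\Os_t$ one gets $\|\pphi_j(t)\|_{1,\B}^2\le C\|\Xb\|^2_{L^\infty(\W^{1,\infty}(\B))}\|\ppsi_j\|_{1,\Omega}^2$. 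For the third, $\partial_t\pphi_j(t)=(\Grad\ppsi_j)(\Xb(\s,t))\,\partial_t\Xb(\s,t)$, hence $|\partial_t\pphi_j(t)|\le|\Grad\ppsi_j(\Xb(t))|\,\|\partial_t\Xb(t)\|_{L^\infty(\B)}$ pointwise; the change of variables (still volume-preserving) yields $\|\partial_t\pphi_j(t)\|_{0,\B}^2\le C\|\Xb\|^2_{W^{1,\infty}(L^\infty(\B))}\|\ppsi_j\|_{1,\Omega}^2$. Taking $L^\infty(0,T)$ norms in $t$ throughout, using the continuity in time from Assumption~\ref{ass}, gives \eqref{eq:serie}.

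**Main obstacle.** The only genuinely delicate point is the justification that the $t$-derivative may be taken termwise in the expansion \eqref{eq:phi_chi} — i.e.\ that $\delta_{jr}'(t)=\c(\partial_t\pphi_j(t),\cchi_r)$ and that $\sum_r(\delta_{jr}')^2c_r$ equals $\|\partial_t\pphi_j(t)\|_{0,\B}^2$ rather than merely bounding it. This follows because $\Xb\in C^1([0,T];\W^{1,\infty}(\B))$ forces $\pphi_j=\ppsi_j\circ\Xb\in C^1([0,T];\Hub)$ (the map $\Yb\mapsto\ppsi_j\circ\Yb$ being Lipschitz from $\W^{1,\infty}(\B)$ into $\Hub$ once $\ppsi_j\in\mathbf H^1$, with the needed care that $\ppsi_j$ is only $H^1$, so one argues by density or uses that $\ppsi_j\in\mathbf H^2$ near the fluid region after elliptic regularity for \eqref{eq:eigpsi} on the convex domain $\Omega$), and then the Fourier coefficients of a $C^1$ Hilbert-space-valued curve are themselves $C^1$ with the expected derivatives. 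Everything else is a routine combination of the spectral decomposition and the unit-Jacobian change of variables. I would write the proof in the order: (i) record the two Parseval identities; (ii) identify the three series as squared norms of $\pphi_j$ or $\partial_t\pphi_j$; (iii) perform the three change-of-variables estimates; (iv) take $L^\infty$ in time.
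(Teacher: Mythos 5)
Your proposal is correct and follows essentially the same route as the paper: identify each series with $\|\pphi_j(t)\|^2_{0,\B}$, $\|\pphi_j(t)\|^2_{1,\B}$, and $\|\partial_t\pphi_j(t)\|^2_{0,\B}$ via the orthogonality of the $\cchi_r$ in $\Ldb$ and with respect to $\c$, and then bound these norms using the chain rule and the unit-Jacobian change of variables $\x=\Xb(\s,t)$. Your explicit Parseval identities and the remark on termwise differentiation of \eqref{eq:phi_chi} are, if anything, more careful than the paper's own argument, which asserts $\partial_t\pphi_j(t)=\sum_r\delta_{jr}'(t)\cchi_r$ without further comment.
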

\begin{proof}
For each $j\in\NA$, $\ppsi_j\in\V$ is an eigenfunction of~\eqref{eq:eigpsi} with
$\|\ppsi_j\|^2_\OO=1$ and $\|\Grads{\ppsi_j}\|^2_\OO=\lambda_{fj}$.
Hence $\pphi_j$ is continuous from $[0,T]$ into $\Hub$ with the time
derivative in $L^\infty(0,T;\Ldb)$.
Taking into account the properties of the eigensolutions
of~\eqref{eq:eigchi} we have
\[
\|\pphi_j(t)\|^2_\OB=
\sum_{r=1}^\infty\delta_{jr}^2(t)\|\cchi_r\|^2_\OB
=\sum_{r=1}^\infty\delta_{jr}^2(t)c_r.
\]
Hence we have:
\[
\bigg\|\sum_{r=1}^\infty \delta_{jr}^2(t)c_r\bigg\|_{L^\infty(0,T)}=
\|\pphi_j(t)\|^2_{L^\infty(\Ldb)}
\le
C\|\Xb\|^2_{L^\infty(\L^\infty(\B))}\|\ppsi_j\|^2_{\OO}.
\]
Similarly, we set
$d_r=\|\Grad_s\cchi_r\|^2_\OB=\frac{\lambda_{sr}-1}{\lambda_{sr}}$ where
$\lambda_{sr}$ are the eigenvalues of~\eqref{eq:eigchi}. It is easy to see that
$0<\frac{\lambda_{s1}-1}{\lambda_{s1}}\le d_r<1$. Then we have
\[
\|\Grad_s\pphi_j(t)\|^2_\OB
=\sum_{r=1}^\infty\delta_{jr}^2(t)\|\Grad_s\cchi_r\|^2_\OB
=\sum_{r=1}^\infty\delta_{jr}^2(t)d_r.
\]
The above equations imply that for each $t\in[0,T]$ the series on the right
hand side is convergent and that the first two inequalities in~\eqref{eq:serie}
hold true. Let us now show the third one. We have that the time
derivative of $\pphi_j(t)$ is given by $\frac{\partial\pphi_j}{\partial t}(t)
=\Grad\ppsi_j\frac{\partial\Xb}{\partial t}(t)$, hence it belongs to
$L^\infty(0,T;\Ldb)$; moreover we have
\[
\frac{\partial\pphi_j}{\partial t}(t)=\sum_{r=1}^\infty \delta'_{jr}(t)\cchi_r,
\]
from which we obtain 
\[
\|\pphi'_j(t)\|^2_\OB=\sum_{r=1}^\infty (\delta'_{jr})^2c_r
\]
and we conclude again that the series on the right hand side is convergent and
that the estimate in the second inequality of~\eqref{eq:serie} is verified.
\end{proof}

Using the expression~\eqref{eq:phi_chi} into~\eqref{eq:systemm}, we arrive at
\[
\aligned
&\rho_f\alpha_i'(t)+a(\ppsi_i,\ppsi_i)\alpha_i(t)\\
&\qquad+\dr\sum_{j=1}^m\left(C_{ij}(t)\alpha_j'(t)+D_{ij}(t)\alpha_j(t)\right)
+\kappa\sum_{j=1}^m \delta_{ij}(t)d_j\beta_j(t)=0\\
&\beta_i'(t)=\sum_{j=1}^m\alpha_j(t)B_{ji}(t),
\endaligned
\]
where $B(t)$, $C(t)$, $D(t)$, and $E(t)$ are real matrices in
$\RE^{m\times m}$ with elements
\begin{equation}
\label{eq:matrici}
\aligned
&B_{ji}(t)=\delta_{ji}(t)&&\quad
C_{ij}(t)=\sum_{r=1}^\infty \delta_{jr}(t)\delta_{ir}(t)c_r\\
&D_{ij}(t)=\sum_{r=1}^\infty \delta'_{jr}(t)\delta_{ir}(t)c_r&&\quad
E_{ij}(t)=\delta_{ij}(t)d_j.
\endaligned
\end{equation}
Let $\aalm(t)$ and $\bbem(t)$ be the vector valued functions with components
$\alm_j(t)$ and $\bem_j(t)$, respectively.
We have obtained the following system of linear ordinary differential equations
\begin{equation}
\label{eq:ode}
\aligned
&\left(\rho_f\I_m+\dr C(t)\right)(\aalm(t))'
+\left(a(\ppsi_i,\ppsi_i)\I_m+D(t)\right)\aalm(t)\\
&\qquad+E(t)\bbem(t)=0\\
&(\bbem(t))'=B^T\aalm(t)\\
&\aalm(0)=\aalm_0\\
&\bbem(0)=\bbem_0,
\endaligned
\end{equation}
where $\aalm_0$ and $\bbem_0$ are the vectors with components $\alm_{0j}$
and $\bem_{0j}$ (see~\eqref{eq:inKm_ini}.
\begin{lemma}
\label{le:matrici}
Under Assumption~\ref{ass},
the matrices $B(t)$, $C(t)$, $D(t)$, and $E(t)$ given by ~\eqref{eq:matrici}
are well defined and continuous in $[0,T]$. Moreover, the matrix
$\rho_f\I_m+\dr C(t)$ is invertible with continuous inverse.
\end{lemma}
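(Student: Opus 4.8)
The plan is to recognize the infinite sums defining $C(t)$ and $D(t)$ as ordinary $\Ldb$-inner products, to deduce well-definedness and continuity of all four matrices from the time-regularity of the functions $\pphi_j(t)=\ppsi_j\circ\Xb(t)$ already exploited in Lemma~\ref{le:serie}, and finally to obtain invertibility of $\rho_f\I_m+\dr C(t)$ from a two-sided spectral bound on the Gram matrix $C(t)$ that is uniform in $t$.

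First, expanding $\pphi_i(t)=\sum_r\delta_{ir}(t)\cchi_r$ and $\pphi_j(t)=\sum_r\delta_{jr}(t)\cchi_r$ and using that $\{\cchi_r\}$ is $\Ldb$-orthogonal with $\|\cchi_r\|_{0,\B}^2=c_r$, one gets
\[
C_{ij}(t)=(\pphi_i(t),\pphi_j(t))_\B,\qquad
D_{ij}(t)=(\pphi_i(t),\pphi'_j(t))_\B ,
\]
where $\pphi'_j(t)=\bigl((\Grad\ppsi_j)\circ\Xb(t)\bigr)\frac{\partial\Xb}{\partial t}(t)$. Both series converge absolutely: by the Cauchy--Schwarz inequality applied to the $c_r$-weighted $\ell^2$ coefficients they are dominated by $\bigl(\sum_r\delta_{jr}^2(t)c_r\bigr)^{1/2}\bigl(\sum_r\delta_{ir}^2(t)c_r\bigr)^{1/2}$, respectively $\bigl(\sum_r(\delta'_{jr}(t))^2c_r\bigr)^{1/2}\bigl(\sum_r\delta_{ir}^2(t)c_r\bigr)^{1/2}$, all of which are finite and bounded on $[0,T]$ by Lemma~\ref{le:serie}. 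The matrices $B(t)$ and $E(t)$ are finite expressions in $\delta_{ij}(t)=\c(\pphi_j(t),\cchi_i)$ and in the constants $d_j$, hence trivially well defined.

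Next, for continuity it suffices to show $\pphi_j\in C^0([0,T];\Hub)$ and $\pphi'_j\in C^0([0,T];\Ldb)$ for each $j$, since $\c(\cdot,\cdot)$ and $(\cdot,\cdot)_\B$ are continuous bilinear forms and the $d_j$ are constant. In $\Grad_s\pphi_j(t)=\bigl((\Grad\ppsi_j)\circ\Xb(t)\bigr)\Grad_s\Xb(t)$ and in $\pphi'_j(t)$, the factors $\Grad_s\Xb(t)$ and $\frac{\partial\Xb}{\partial t}(t)$ are continuous in $L^\infty(\B)$ because $\Xb\in C^1([0,T];\W^{1,\infty}(\B))$; so it remains to check that $t\mapsto h\circ\Xb(t)$ is continuous from $[0,T]$ into $\Ldb$ for every $h\in L^2(\Omega)$. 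This I would prove by an $\varepsilon/3$ density argument: for smooth $g$ the map $t\mapsto g\circ\Xb(t)$ is continuous in $C^0(\overline\B)$ since $\Xb(t)\to\Xb(t_0)$ uniformly on $\overline\B$, while the operators $h\mapsto h\circ\Xb(t)$ are bounded from $L^2(\Omega)$ into $\Ldb$ with norm $\le1$ thanks to $\Jb\equiv1$; applying this with $h=\ppsi_j$ and, componentwise, $h=\Grad\ppsi_j$ yields the two desired continuities, hence the continuity of $B$, $C$, $D$, and $E$.

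Finally, for the invertibility of $\rho_f\I_m+\dr C(t)$: by the identity above, $C(t)$ is the $\Ldb$-Gram matrix of $\pphi_1(t),\dots,\pphi_m(t)$ (positive definite, since these are linearly independent by Proposition~\ref{pr:base_phi}), and for $v=\sum_j\xi_j\ppsi_j$ the change of variables $\x=\Xb(\s,t)$ with $\Jb\equiv1$ gives $\xi^\top C(t)\xi=\bigl\|\sum_j\xi_j\pphi_j(t)\bigr\|_{0,\B}^2=\int_{\Os_t}|v|^2\le\|v\|_{0,\Omega}^2=\sum_j\xi_j^2$, because the $\ppsi_j$ are orthonormal in $L^2(\Omega)$; thus $0\preceq C(t)\preceq\I_m$ for all $t$. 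Hence $\rho_f\I_m+\dr C(t)\succeq\rho_f\I_m$ when $\dr\ge0$ and $\rho_f\I_m+\dr C(t)\succeq(\rho_f-|\dr|)\I_m=\rho_s\I_m$ when $\dr<0$, so in every case $\rho_f\I_m+\dr C(t)\succeq\min(\rho_f,\rho_s)\,\I_m\succ0$ uniformly in $t$; it is therefore invertible with $\|(\rho_f\I_m+\dr C(t))^{-1}\|\le\min(\rho_f,\rho_s)^{-1}$, and continuity of the inverse follows from continuity of $t\mapsto C(t)$ together with $\det(\rho_f\I_m+\dr C(t))\ge\min(\rho_f,\rho_s)^m>0$ (Cramer's rule). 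I expect the only genuinely delicate point to be the $\Ldb$-continuity of $t\mapsto h\circ\Xb(t)$ for merely $L^2$ data — i.e.\ controlling the composition of a non-smooth function with the moving, only Lipschitz-regular map $\Xb(t)$ — which is what forces the density argument; the rest is bookkeeping, and the bound $\|C(t)\|\le1$, an immediate consequence of the incompressibility normalization $\Jb\equiv1$, is precisely what makes the inverse bounded uniformly in $t$.
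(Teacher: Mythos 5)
Your proposal is correct and follows the same skeleton as the paper's proof: Cauchy--Schwarz on the $c_r$-weighted series controlled by Lemma~\ref{le:serie} gives well-definedness of $C$ and $D$, the entries of $B$ and $E$ are finite expressions in $\delta_{ij}(t)$, and invertibility comes from recognizing $C(t)$ as the $\Ldb$-Gram matrix of the $\pphi_j(t)$ and hence symmetric positive semidefinite. You go beyond the paper in two places, and both additions are genuine improvements rather than detours. First, the paper asserts the continuity of $t\mapsto\pphi_j(t)$ in $\Hub$ (in the proof of Lemma~\ref{le:serie}) without justification; your $\varepsilon/3$ density argument for $t\mapsto h\circ\Xb(t)$ with merely $L^2$ data, using the uniform bound on the composition operators coming from $\Jb\equiv1$, is exactly the missing detail. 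Second, and more substantively, the paper only proves $C(t)\succeq0$ and concludes invertibility of $\rho_f\I_m+\dr C(t)$; that inference is valid only when $\dr=\rho_s-\rho_f\ge0$, a sign restriction the paper never states (though it is also implicitly needed in the a priori estimate~\eqref{eq:final}). Your change-of-variables bound $\xi^\top C(t)\xi=\|\sum_j\xi_j\ppsi_j\|^2_{0,\Os_t}\le\sum_j\xi_j^2$, i.e.\ $C(t)\preceq\I_m$, yields the uniform lower bound $\rho_f\I_m+\dr C(t)\succeq\min(\rho_f,\rho_s)\I_m$ for either sign of $\dr$, together with a quantitative bound on the inverse that the paper does not provide. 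In short: same route, but your version closes a continuity gap and removes an unstated hypothesis from the invertibility step.
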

\begin{proof}
By definition~\eqref{eq:phi_chi}, it is clear that the elements of $B(t)$ and
$E(t)$ are continuous in $[0,T]$.
Let us consider the elements of $C(t)$. Thanks to the Cauchy--Schwarz inequality
we have
\[
\aligned
\|C_{ij}\|_{L^\infty(0,T)}
&=\left|\sum_{r=1}^\infty \delta_{jr}(t)\delta_{ir}(t)c_r\right|\\
&\le\left(\sum_{r=1}^\infty \delta_{jr}^2(t)c_r\right)^{1/2}
\left(\sum_{r=1}^\infty \delta_{ir}^2(t)c_r\right)^{1/2}\\
&\le\|\pphi_j\|_{L^\infty(\Ldb)}\|\pphi_i\|_{L^\infty(\Ldb)}.
\endaligned
\]
Since $\pphi_j(t)$ for $j\in\NA$ is continuous in $[0,T]$ with values in
$\Hub$, the series $\sum_{r=1}^\infty \delta_{jr}^2(t)c_r$ is continuous in
$[0,T]$. 
This implies that the elements of $C(t)$ are also continuous in $[0,T]$.

A similar argument shows that $\|D_{ij}\|_{L^\infty(0,T)}$ is bounded.

Now we show that $\rho_f\I_m+\dr C(t)$ is invertible.
Since $C(t)$ is symmetric, it is enough to show that $C$ is also positive
semidefinite that is $x^TCx\ge0$ for all $x\in\RE^m$. This can be obtained by
direct computation as follows
\[
\aligned
x^TCx&=
\sum_{i,j=1}^mx_i\left(\sum_{r=1}^\infty \delta_{ir}(t)c_r\delta_{jr}(t)\right)x_j
\\
&=\sum_{r=1}^\infty c_r \left(\sum_{i=1}^mx_i\delta_{ir}(t)\right)
\left(\sum_{j=1}^mx_j\delta_{jr}(t)\right)\\
&=\sum_{r=1}^\infty c_r \left(\sum_{i=1}^mx_i\delta_{ir}(t)\right)^2\ge0.
\endaligned
\]
\end{proof}
\begin{proposition}
\label{pr:ode_existence}
The system of ordinary differential
equations~\eqref{eq:ode} has a unique solution $\aalm\in C^1([0,T])$ and
$\bbem\in C^1([0,T])$.
\end{proposition}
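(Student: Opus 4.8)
The plan is to rewrite~\eqref{eq:ode} as a single linear Cauchy problem in $\RE^{2m}$ with continuous coefficients, and then to invoke the classical existence and uniqueness theory for linear systems of ordinary differential equations.

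First I would exploit Lemma~\ref{le:matrici}: the matrix $M(t):=\rho_f\I_m+\dr C(t)$ is invertible for every $t\in[0,T]$ and $t\mapsto M(t)^{-1}$ is continuous on $[0,T]$. Multiplying the first equation in~\eqref{eq:ode} on the left by $M(t)^{-1}$ turns it into an explicit formula for $(\aalm)'$, namely
\[
(\aalm(t))'=-M(t)^{-1}\Big[\big(a(\ppsi_i,\ppsi_i)\I_m+D(t)\big)\,\aalm(t)+E(t)\,\bbem(t)\Big].
\]
Stacking this with $(\bbem(t))'=B^T\aalm(t)$ and setting $\y^m(t)=\big(\aalm(t),\bbem(t)\big)\in\RE^{2m}$, the problem takes the form $(\y^m)'(t)=\mathbb A(t)\,\y^m(t)$, $\y^m(0)=\big(\aalm_0,\bbem_0\big)$, with $2m\times2m$ coefficient matrix
\[
\mathbb A(t)=\begin{pmatrix}-M(t)^{-1}\big(a(\ppsi_i,\ppsi_i)\I_m+D(t)\big)&-M(t)^{-1}E(t)\\ B^T&0\end{pmatrix}.
\]

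By Lemma~\ref{le:matrici} the blocks $D(t)$, $E(t)$, $B^T=B(t)^T$ and $M(t)^{-1}$ are all continuous on the compact interval $[0,T]$, hence $\mathbb A\in C^0([0,T];\RE^{2m\times2m})$ and in particular $t\mapsto\|\mathbb A(t)\|$ is bounded. The right-hand side $(t,\y)\mapsto\mathbb A(t)\,\y$ is therefore continuous in $t$ and globally Lipschitz in $\y$, uniformly in $t$, so the Cauchy--Lipschitz (Picard--Lindel\"of) theorem provides a unique solution $\y^m\in C^1([0,T];\RE^{2m})$ defined on all of $[0,T]$; linearity of the system rules out finite-time blow-up, so there is no need to glue together local solutions. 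Reading off the two blocks gives $\aalm,\bbem\in C^1([0,T])$ with the prescribed initial values, which is the assertion.

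The argument is routine once Lemma~\ref{le:matrici} is in hand; the only genuinely non-trivial ingredient, the invertibility of $\rho_f\I_m+\dr C(t)$ together with the continuity of its inverse, has already been settled there, so I do not expect any real obstacle at this step. (If one later needed more time regularity of $\aalm$ and $\bbem$, it would suffice to assume more regularity of $\Xb$ in $t$, which propagates through the coefficients $\delta_{jr}$ to $\mathbb A(t)$.)
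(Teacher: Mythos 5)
Your proof is correct and follows essentially the same route as the paper: the paper's own argument simply invokes Lemma~\ref{le:matrici} to invert $\rho_f\I_m+\dr C(t)$ and then appeals to the standard theory of linear first-order ODE systems with continuous coefficients, exactly the reduction you carry out explicitly. Your version merely spells out the reformulation as a $2m\times 2m$ linear Cauchy problem and the Picard--Lindel\"of step that the paper leaves implicit.
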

\begin{proof}
As a consequence of Lemma~\ref{le:matrici}, the matrix $\rho_f\I_m+\dr C(t)$ 
is invertible with continuous inverse, hence the standard theory for systems
of linear first order ordinary differential equations gives
that~\eqref{eq:ode} has a unique solution in $C^1([0,T])$.
\end{proof}
The above proposition yields the existence of the solution of~\eqref{eq:inKm},
stated in the following theorem
\begin{thm}
\label{th:gal_existence}
There exists a unique solution $(\u^m(t),\w^m(t))\in\Kt^m$ and $\X^m(t)\in\HH^m$
of~\eqref{eq:inKm} and~\eqref{eq:inKm_ini} with
\begin{equation}
\label{eq:Galregu}
(\u^m,\w^m)\in C^1([0,T];\Kt^m), \quad \X^m\in C^1([0,T];\HH^m).
\end{equation}
\end{thm}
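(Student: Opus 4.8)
The plan is to observe that the statement is almost a direct corollary of Proposition~\ref{pr:ode_existence}: what remains is to check that the Galerkin problem~\eqref{eq:inKm}--\eqref{eq:inKm_ini} is, after choosing convenient test functions, equivalent to the linear system of ordinary differential equations~\eqref{eq:ode}. First I would note that, by~\eqref{eq:Km}, the space $\Kt^m$ is $m$-dimensional with basis $\{(\ppsi_i,\pphi_i(t))\}_{i=1}^m$, and $\HH^m=\Span(\cchi_1,\dots,\cchi_m)$, so it suffices to impose the first relation in~\eqref{eq:inKm} for $(\v,\z(t))=(\ppsi_i,\pphi_i(t))$ and the second for $\y=\cchi_i$, $i=1,\dots,m$. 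Inserting the ansatz~\eqref{eq:Galapprox} and differentiating in $t$ --- the one point requiring care being that $\pphi_j$ itself depends on $t$, so that $\frac{\partial}{\partial t}\big(\alm_j(t)\pphi_j(t)\big)=(\alm_j)'(t)\pphi_j(t)+\alm_j(t)\pphi_j'(t)$, which is legitimate because, by Assumption~\ref{ass} and the computation carried out in the proof of Lemma~\ref{le:serie}, $\pphi_j\in C^1([0,T];\Ldb)$ with $\pphi_j'=\Grad\ppsi_j\,\partial_t\Xb$ --- one obtains exactly~\eqref{eq:systemm}; expanding $\pphi_j(t)$ in the basis $\{\cchi_r\}$ through~\eqref{eq:phi_chi} and collecting the coefficients into the matrices~\eqref{eq:matrici} then yields~\eqref{eq:ode}, with the initial conditions~\eqref{eq:inKm_ini} becoming $\aalm(0)=\aalm_0$, $\bbem(0)=\bbem_0$.

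Next I would invoke Lemma~\ref{le:matrici}: the matrices $B(t),C(t),D(t),E(t)$ are continuous on $[0,T]$ and $\rho_f\I_m+\dr C(t)$ is invertible with continuous inverse, so Proposition~\ref{pr:ode_existence} applies and gives a unique pair $\aalm,\bbem\in C^1([0,T])$. Defining $(\u^m,\w^m,\X^m)$ by~\eqref{eq:Galapprox} and reading the derivation of the previous paragraph backwards shows that this triple solves~\eqref{eq:inKm}--\eqref{eq:inKm_ini}; conversely, any solution of~\eqref{eq:inKm}--\eqref{eq:inKm_ini} determines, via the same identification, coefficients $\aalm,\bbem$ that satisfy~\eqref{eq:ode}, whence uniqueness. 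This equivalence is the step that really carries the proof, and it is essentially the only place where something has to be verified; there is no genuine analytic difficulty here, the substantive work (convergence of the series defining $C(t)$ and $D(t)$, invertibility of the mass matrix, and the time regularity of the moving basis $\pphi_j$) having already been done in Proposition~\ref{pr:base_phi} and Lemmas~\ref{le:serie}--\ref{le:matrici}.

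For the regularity~\eqref{eq:Galregu} it then suffices to remark that $\u^m(t)=\sum_{j=1}^m\alm_j(t)\ppsi_j$ and $\X^m(t)=\sum_{j=1}^m\bem_j(t)\cchi_j$ are finite linear combinations of fixed elements of $\V$ and $\Hub$ with $C^1$ scalar coefficients, hence lie in $C^1([0,T];\V^m)$ and $C^1([0,T];\HH^m)$; for $\w^m(t)=\sum_{j=1}^m\alm_j(t)\pphi_j(t)$ one uses additionally that each $\pphi_j\in C^1([0,T];\Ldb)$, so that $\w^m\in C^1([0,T];\Ldb)$, while by construction $(\u^m(t),\w^m(t))\in\Kt^m$ for every $t$, which is the content of the claimed membership in $C^1([0,T];\Kt^m)$.
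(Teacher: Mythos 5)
Your proposal is correct and follows essentially the same route as the paper: the paper presents this theorem as a direct consequence of the reduction of~\eqref{eq:inKm} to the ODE system~\eqref{eq:ode} (carried out in the text via~\eqref{eq:systemm}, \eqref{eq:phi_chi}, and~\eqref{eq:matrici}), together with Lemma~\ref{le:matrici} and Proposition~\ref{pr:ode_existence}. You merely make explicit the equivalence between the Galerkin formulation and the ODE system and the verification of the regularity~\eqref{eq:Galregu}, both of which the paper leaves implicit.
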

\subsection{A priori estimates}
We have the following a priori estimates for the solution of~\eqref{eq:inKm}
and~\eqref{eq:inKm_ini}. 
\begin{proposition}
\label{pr:apriori}
The following bounds hold true with $C>0$ independent of $m$:
\begin{subequations}
\begin{alignat}{1}
&\|\u^m\|_{L^\infty(\Ld)}+\|\u^m\|_{L^2(\Huo)}\le
C\left(\|\u^m_0\|_\OO+\|\u^m_{s0}\|_\OB+|\B|^{1/2}\right)
\label{eq:aprioriu}\\
&\|\w^m\|_{L^\infty(\Ldb)}
+\|\w^m\|_{L^2(\Hub)}\le
C\left(\|\u^m_0\|_\OO+\|\u^m_{s0}\|_\OB+|\B|^{1/2}\right)
\label{eq:aprioriw}\\
&\|\X^m\|_{L^\infty(\Hub)}\le
C\left(\|\u^m_0\|_\OO+\|\u^m_{s0}\|_\OB+|\B|^{1/2}\right)
\label{eq:aprioriX}\\
&\left\|\frac{\partial\X^m}{\partial t}\right\|_{L^\infty(\Ldb)}
+\left\|\frac{\partial\X^m}{\partial t}\right\|_{L^\infty(\Hub)}\le
C\big(\|\u^m_0\|_\OO+\|\u^m_{s0}\|_\OB+|\B|^{1/2}\big),
\label{eq:aprioriXt}
\end{alignat}
\end{subequations}
where $|\B|$ stands for the measure of $\B$.
\end{proposition}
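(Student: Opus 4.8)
The strategy is the classical energy estimate for the Galerkin system: test the first equation of~\eqref{eq:inKm} with the discrete solution itself. Since $(\u^m(t),\w^m(t))\in\Kt^m$ by construction, we may take $(\v,\z(t))=(\u^m(t),\w^m(t))$ as a legitimate test pair. This yields
\[
\rho_f\frac12\frac{d}{dt}\|\u^m(t)\|^2_\OO+a(\u^m(t),\u^m(t))
+\dr\left(\frac{\partial\w^m}{\partial t}(t),\w^m(t)\right)_\B
+\kappa\left(\Grad_s\X^m(t),\Grad_s\frac{\partial\X^m}{\partial t}(t)\right)_\B=0,
\]
where in the last term I have used the second equation of~\eqref{eq:inKm} with $\y=\cchi_i$, together with the fact that $\partial\X^m/\partial t\in\HH^m$, to replace $\w^m$ by $\partial\X^m/\partial t$ in the $(\w^m,\cchi_i)_\B$ pairing that appears after expanding $\Grad_s\z(t)=\Grad_s\w^m(t)$ against $\Grad_s\cchi_i$. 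The key point is that this manipulation turns the $\kappa$-term into $\frac{\kappa}{2}\frac{d}{dt}\|\Grad_s\X^m(t)\|^2_\OB$ and the $\dr$-term into $\frac{\dr}{2}\frac{d}{dt}\|\w^m(t)\|^2_\OB$, so that the whole left-hand side is a total time derivative plus the coercive term $a(\u^m,\u^m)$. Here one uses $\dr=\rho_s-\rho_f>0$, which I will assume (the physically relevant buoyancy condition); if $\dr$ could vanish or be negative the $\w^m$ bound would have to be recovered differently.

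Integrating in time from $0$ to $t$ and using Korn's inequality~\eqref{eq:Korn} gives
\[
\rho_f\|\u^m(t)\|^2_\OO+\dr\|\w^m(t)\|^2_\OB+\kappa\|\Grad_s\X^m(t)\|^2_\OB
+2\mathbf{k}\int_0^t\|\Grad\u^m(\tau)\|^2_\OO\,d\tau
\le \rho_f\|\u^m_0\|^2_\OO+\dr\|\u^m_{s0}\|^2_\OB+\kappa\|\Grad_s\X^m_0\|^2_\OB.
\]
Since $\X^m_0$ is the $L^2(\B)$-projection of the identity map $\s\mapsto\s$, the quantity $\|\Grad_s\X^m_0\|^2_\OB\le\|\Grad_s\,\mathrm{id}\|^2_\OB=d|\B|$ is bounded by a constant times $|\B|$, which accounts for the $|\B|^{1/2}$ term on the right-hand sides; similarly $\|\u^m_0\|_\OO\le\|\u_0\|_\OO$ and $\|\u^m_{s0}\|_\OB\le C\|\u_{s0}\|_\OB$ by stability of the projections. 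Taking the supremum over $t\in[0,T]$ on the left and bounding each term separately yields~\eqref{eq:aprioriu} (the $L^\infty(\Ld)$ part from the $\u^m(t)$ term, the $L^2(\Huo)$ part from the integral term after adding back $\|\u^m\|_{L^2(\Ld)}$ via Poincar\'e, which follows from the gradient bound on $\Huo$), as well as~\eqref{eq:aprioriw} for the $L^\infty(\Ldb)$ norm of $\w^m$ and~\eqref{eq:aprioriX}. For~\eqref{eq:aprioriX} one combines $\|\Grad_s\X^m(t)\|_\OB$ from the energy identity with $\|\X^m(t)\|_\OB\le\|\X^m_0\|_\OB+\int_0^t\|\w^m\|_\OB\,d\tau$ obtained by integrating the ODE $\partial\X^m/\partial t=$ (projection of) $\w^m$ and using the already-established $L^\infty(\Ldb)$ bound on $\w^m$.

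The remaining estimates need a little more. For the $L^2(\Hub)$ bound on $\w^m$ in~\eqref{eq:aprioriw} and the $\partial\X^m/\partial t$ bounds in~\eqref{eq:aprioriXt}, the idea is to exploit the identity $\w^m(t)=\sum_j\alm_j(t)\pphi_j(t)$ together with $\u^m(t)=\sum_j\alm_j(t)\ppsi_j$: the \emph{same} coefficients $\alm_j$ appear, so $\w^m(t)=(\u^m\circ\Xb)(t)$ in the sense of the basis expansion, and Lemma~\ref{le:serie} controls $\|\pphi_j(t)\|_\Hub$ and hence the passage from bounds on $\u^m$ to bounds on $\w^m$. Concretely, since $\w^m(\cdot,t)=\u^m(\Xb(\cdot,t),t)$ pointwise (both sides being the same linear combination of $\ppsi_j\circ\Xb$), the chain rule and $\Xb\in C^1([0,T];\W^{1,\infty}(\B))$ give
\[
\|\Grad_s\w^m(t)\|_\OB\le\|\Grad\u^m(\Xb(t),t)\|\,\|\Grad_s\Xb(t)\|_{\infty}
\le C\|\Xb\|_{L^\infty(\W^{1,\infty}(\B))}\,\|\Grad\u^m(t)\|_\OO
\]
(using $\Jb\equiv1$ for the change of variables in the $L^2$ norm), so integrating in time and invoking the $L^2(\Huo)$ bound on $\u^m$ already proved gives the $L^2(\Hub)$ bound on $\w^m$. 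Finally, $\partial\X^m/\partial t$ is the $\HH^m$-projection of $\w^m$, so its $L^\infty(\Ldb)$ and $L^\infty(\Hub)$ norms are controlled by those of $\w^m$ — but note $\w^m\in L^\infty(\Hub)$ is \emph{not} among the bounds we have; instead one reads $\partial\X^m/\partial t$ directly from the second ODE in~\eqref{eq:ode}, $(\bbem)'=B^T\aalm$, and estimates $\|\partial\X^m/\partial t\|_\Hub$ using that $\partial\X^m/\partial t=\sum_j(\bem_j)'\cchi_j$ with the $\cchi_j$ orthonormal in $\c$, reducing to $\sum_j((\bem_j)')^2=\sum_j(\sum_k\alm_k B_{kj})^2$, which by the third inequality of Lemma~\ref{le:serie} (the one involving $\delta'_{jr}$, controlling the time regularity of $\pphi$) is bounded by $C\|\Xb\|^2_{W^{1,\infty}(L^\infty(\B))}\|\u^m\|^2_{1,\Omega}$, and then integrate or take sup as needed. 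The main obstacle is bookkeeping: keeping straight which norm of $\Xb$ controls which passage (plain $L^\infty(\L^\infty)$ for $L^2$-type transfers, $L^\infty(\W^{1,\infty})$ for gradient transfers, $W^{1,\infty}(\L^\infty)$ for the $\partial_t\X$ estimate), and making sure every constant $C$ is genuinely independent of $m$ — which it is, because all of these bounds pass through Lemma~\ref{le:serie}, whose right-hand sides involve only fixed eigenfunction norms $\|\ppsi_j\|_{1,\Omega}$ and the fixed data $\Xb$, summed against the coefficients $\alm_j$ that are themselves controlled by the $m$-uniform energy estimate.
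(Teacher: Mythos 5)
Your proof follows essentially the same route as the paper: the energy identity obtained by testing with $(\u^m,\w^m)$, the identification of $\w^m$ with $\partial\X^m/\partial t$ through the second equation so that the $\kappa$-term becomes a total time derivative, Korn's inequality and integration in time, the chain-rule/change-of-variables argument $\Grad_s\w^m=(\Grad\u^m\circ\Xb)\Grad_s\Xb$ with $\Jb\equiv1$ for the $L^2(\Hub)$ bound on $\w^m$, and the integrated ODE $\X^m(t)=\X^m_0+\int_0^t\w^m$ for the remaining norm of $\X^m$. The only inessential slip is your appeal to the third inequality of Lemma~\ref{le:serie} (which concerns $\delta_{jr}'$ and the matrix $D$, not $B$) for the $\partial\X^m/\partial t$ estimate; the simpler fact that $\partial\X^m/\partial t$ is the projection of $\w^m$ onto $\HH^m$, stable in both $\Ldb$ and $\Hub$, is what the paper uses and is already contained in your own earlier remark.
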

\begin{proof}
By definition~\eqref{eq:Galapprox} we have that
\[
\frac{\partial\u^m}{\partial t}\in L^2(0,T;\V)\qquad
\frac{\partial\w^m}{\partial t}\in L^2(0,T;\Hub)\qquad
\frac{\partial\X^m}{\partial t}\in L^2(0,T;\Hub)
\]
implying that
\[
\aligned
&2\left(\frac{\partial\u^m}{\partial t}(t),\u^m(t)\right)
=\frac{d}{dt}\|\u^m(t)\|^2_\OO\\
&2\left(\frac{\partial\w^m}{\partial t}(t),\w^m(t)\right)_{\B}
=\frac{d}{dt}\|\w^m(t)\|^2_\OB\\
&2\left(\Grad_s\frac{\partial\X^m}{\partial t}(t),\Grad_s\X^m(t)\right)_{\B}
=\frac{d}{dt}\|\Grad_s\X^m(t)\|^2_\OB.
\endaligned
\]
Let us take $(\v,\z(t))=(\u^m(t),\w^m(t))$ in the first equation
in~\eqref{eq:inKm}, then
\[
\aligned
&\frac{\rho_f}2\frac{d}{dt}\|\u^m(t)\|^2_\OO+\nu_0\|\Grads{\u^m(t)}\|^2_\OO\\
&\qquad
+\frac{\dr}2\frac{d}{dt}\|\w^m(t)\|^2_\OB
+\kappa(\Grad_s\X^m(t),\Grad_s\w^m(t))_{\B}\le0.
\endaligned
\]
Thanks to the fact that $(\u^m(t),\w^m(t))\in\Kt^m$, we have that
$\w^m(t)$ belongs to $\Hub$ and the second equation in~\eqref{eq:inKm} implies
that it is equal to the time derivative of $\X^m(t)$, so that the last
inequality can be rewritten as
\[
\aligned
&\frac{\rho_f}2\frac{d}{dt}\|\u^m(t)\|^2_\OO+\nu_0\|\Grads{\u^m(t)}\|^2_\OO\\
&\qquad
+\frac{\dr}2\frac{d}{dt}\|\w^m(t)\|^2_\OB
+\frac{\kappa}2\frac{d}{dt}\|\Grad_s\X^m(t)\|^2_\OB\le0.
\endaligned
\]
Integrating on $(0,t)$ with $t\in(0,T]$ and taking into
account~\eqref{eq:Korn}, we arrive at
\begin{equation}
\label{eq:final}
\aligned
\rho_f\|\u^m(t)\|^2_\OO&+2\mathbf{k}\int_0^t\|\Grad\u^m(\tau)\|^2_\OO\,d\tau
+\dr\|\w^m(t)\|^2_\OB+\kappa\|\Grad_s\X^m(t)\|^2_\OB\\
&\le\rho_f\|\u^m_0\|^2_\OO+\dr\|\u^m_{s0}\|^2_\OB+\kappa\|\X^m_0\|^2_\OB.
\endaligned
\end{equation}
Thanks to~\eqref{eq:inKm_ini}, the last inequality
implies~\eqref{eq:aprioriu}. In order to
obtain~\eqref{eq:aprioriw}, we observe that a.e. in $t$
$\w^m(t)=(\u^m\circ\Xb)(t)$ in $\Hub$. Therefore 
\[
\Grad_s\w^m(t)=(\Grad\u^m\circ\Xb)(t)\Grad_s\Xb(t)
\]
and
\[
\aligned
\|\Grad_s\w^m\|^2_{L^2(\Ldb)}&=\int_0^T\|\Grad_s\w^m(t)\|^2_\OB\,dt\\
&=\int_0^T\|(\Grad\u^m\circ\Xb)(t)\Grad_s\Xb(t)\|^2_\OB\,dt\\
&\le \|\Grad_s\Xb(t)\|^2_{L^\infty(\L^\infty(\B))}
\int_0^T\|(\Grad\u^m\circ\Xb)(t)\|^2_\OB\,dt\\
&\le C\int_0^T\|\Grad\u^m\|^2_{0,\Os_t}\,dt\le C\|\Grad\u^m\|^2_{L^2(\Ld)}
\endaligned
\]
which together with~\eqref{eq:final} gives~\eqref{eq:aprioriw}.
It remains to bound $\X^m$. Since
\[
\frac{\partial\X^m(t)}{\partial t}=\w^m(t),
\]
we obtain~\eqref{eq:aprioriXt} directly.
Moreover, the inequality~\eqref{eq:final} gives the estimate for
$\|\Grad_s\X^m\|_{L^\infty(\Ldb)}$. Let us now estimate $\X^m(t)$ in the
$\Ld$-norm. Integrating in time the last equation, we obtain
\begin{equation}
\label{eq:Xm}
\X^m(t)=\X_0^m+\int_0^t\w^m(\tau)d\tau.
\end{equation}
After some computations, we get
\[
\aligned
\|\X^m(t)\|^2_\OB&\le\|\X_0^m\|^2_\OB
+\left\|\int_0^t\w^m(\tau)d\tau\right\|^2_\OB\\
&\le\|\X_0^m\|^2_\OB+\left\|t\int_0^t|\w^m(\tau)|^2d\tau\right\|^2_\OB\\
&\le\|\X_0^m\|^2_\OB+t\int_0^t\|\w^m(\tau)\|^2_\OB d\tau.
\endaligned
\]
It follows
\[
\aligned
&\|\X^m\|^2_{L^2(\Ldb)}\le\|\X_0^m\|^2_\OB+T\|\w^m\|^2_{L^2(\Ldb)}\\
&\|\X^m\|^2_{L^\infty(\Ldb)}\le\|\X_0^m\|^2_\OB
+T^2\|\w^m\|^2_{L^\infty(\Ldb)}.
\endaligned
\]
\end{proof}
\subsection{Passing to the limit}\leavevmode

\medskip
\noindent
{\bf Step 1}: $\u^m$ converges to $\u\in L^\infty(0,T;\H)\cap L^2(0,T;\V)$.\\
The a priori estimate~\eqref{eq:aprioriu} shows the existence of an element
$\u\in L^\infty(0,T;\H)$ and of a subsequence $m'\to\infty$ such that
\[
\u^{m'}\weakstar \u\quad\text{in }L^\infty(0,T;\H).
\]
This means that for each $\v\in L^1(0,T;\H)$
\begin{equation}
\label{eq:ws}
\int_0^T(\u^{m'}(t)-\u(t),\v(t))\,dt\to 0\qquad\text{as } m'\to\infty.
\end{equation}
Since $\u^{m'}$ is also bounded in $L^2(0,T;\V)$, we can extract another
subsequence (still denoted $\u^{m'}$) that converges weakly to
$\u^*\in L^2(0,T;\V)$, that is
\[
\u^{m'}\rightharpoonup\u^* \quad\text{in }L^2(0,T;\V).
\]
The above convergence means that 
\begin{equation}
\label{eq:weak}
\int_0^T \langle\u^{m'}(t)-\u^*(t),\v(t)\rangle\,dt\to 0\qquad\text{as }
m'\to\infty\quad\forall\v\in L^2(0,T;\V').
\end{equation}
By the Riesz representation theorem, we can identify $\H$ with $\H'$, so that
\[
\V\subset\H=\H'\subset\V'.
\]
Moreover the duality pairing between $\V'$ and $\V$ can be identified to the
scalar product in $\H$ for $\u\in\V$ and $\v\in\H$, that is
\[
\langle\v,\u\rangle=(\v,\u)\quad\forall\v\in\H,\ \forall\u\in\V.
\]
Comparing~\eqref{eq:ws} and~\eqref{eq:weak} with $\v\in L^2(0,T;\H)$,
we obtain that
\begin{equation}
\label{eq:solu}
\u=\u^*\in L^\infty(0,T;\H)\cap L^2(0,T;\V).
\end{equation}
{\bf Step 2}: $\w^m$ converges to $\w$ in
$L^\infty(0,T;\Ldb)\cap L^2(0,T;\Hub)$.\\
With arguments similar to those used above, we obtain from~\eqref{eq:aprioriw}
the following convergence for some subsequences of $\w^{m'}$:
\begin{equation}
\label{eq:convw}
\aligned
&\w^{m'}\weakstar \w&&\text{in }L^\infty(0,T;\Ldb)\\
&\w^{m'}\rightharpoonup \w^*&&\text{in }L^2(0,T;\Hub).
\endaligned
\end{equation}
Using again the Riesz representation theorem, we have that
$\Hub\subset\Ldb\subset\Hub'$ and we can conclude that
\begin{equation}
\label{eq:solw}
\w=\w^* \in L^\infty(0,T;\Ldb)\cap L^2(0,T;\Hub).
\end{equation}
{\bf Step 3}: The limit $(\u(t),\w(t))$ is contained in $\Kt$.\\
By construction $(\u^m(t),\w^m(t))\in\Kt^m$, that is
\[
\c(\pphi_i(t),(\u^m\circ\Xb)(t)-\w^m(t))=0\quad \text{for }i=1,\dots,m.
\]
Since $\u^m\rightharpoonup\u$ in $L^2(0,T;\V)$, we have that
$\u^m\circ\Xb\rightharpoonup\u\circ\Xb$ in $L^2(0,T;\Hub)$.

Let us consider a scalar function $\phi\in C^\infty(0,T)$. Then we have that
$\phi\pphi_i$ belongs to $L^2(0,T;\Hub)$ and that
\[
\aligned
0&=\int_0^T\phi(t)\c(\pphi_i(t),(\u^m\circ\Xb)(t)-\w^m(t))\,dt\\
&=\int_0^T\c(\phi(t)\pphi_i(t),(\u^m\circ\Xb)(t)-\w^m(t))\,dt.
\endaligned
\]
Recalling that the bilinear form $\c(\cdot,\cdot)$ is the scalar product in
$\Hub$, we can pass to the limit as $m\to\infty$. The weak convergence of
$\u^m\circ\Xb$ and of $\w^m$ in $L^2(0,T;\Hub)$ implies
\begin{equation}
\label{eq:cuno}
\int_0^T\phi(t)\c(\pphi_i,\ucXb-\w(t))\,dt=0
\quad\text{for }i=1,\dots,m.
\end{equation}
The last equality is valid for each $i$, and by linearity for all
finite linear combinations of $\pphi_i(t)$. Using
Proposition~\ref{pr:base_phi}, by continuity, Equation~\eqref{eq:cuno} is
still valid for all $\mmu\in\Hub$ and implies that 
\[
\c(\mmu,\ucXb-\w(t))=0\quad \forall\mmu\in\Hub
\]
holds true in the sense of distributions on $(0,T)$, so that we conclude that
$(\u(t),\w(t))$ belongs to $\Kt$.

\medskip\noindent
{\bf Step 4}: Limit of $\X^m$ and ${\partial\X^m}/{\partial t}$.\\
Since $\X^m$ and $\|{\partial\X^m}/{\partial t}\|$ are bounded in
$L^\infty(0,T;\Hub)$ and $L^\infty(0,T;\Ldb)$, respectively, there exist
$\X\in L^\infty(0,T;\Hub)$, $\Y\in L^\infty(0,T;\Ldb)$, and subsequences $m'$
such that
\begin{equation}
\label{eq:convX}
\aligned
&\X^{m'}\weakstar\X&&\text{in }L^\infty(0,T;\Hub)\\
&\frac{\partial\X^{m'}}{\partial t}\weakstar \Y&&
\text{in }L^\infty(0,T;\Ldb)
\endaligned
\end{equation}
in the sense that
\[
\aligned
&\int_0^T\langle\X^{m'}(t)-\X(t),\y(t)\rangle_{\B}\,dt\to 0
&&\text{as }m\to\infty\quad\forall\y\in L^1(0,T;\Hub')\\
&\int_0^T\left(\frac{\partial\X^{m'}}{\partial t}(t)-\Y(t),\y(t)\right)_{\B}\to0\ 
&&\text{as }m\to\infty \quad\forall\y\in L^1(0,T;\Ldb).
\endaligned
\]
Let us consider a scalar function $\phi(t)$ which is continuously
differentiable in $[0,T]$ and $\phi(T)=0$, and let us denote by $\phi'(t)$ its
derivative. Then for $j=1,\dots,m$
\[
\int_0^T\left(\frac{\partial\X^m}{\partial t}(t),\cchi_j\right)_{\B}\phi(t)\,dt
=-\int_0^T\left(\X^m(t),\phi'(t)\cchi_j\right)_{\B}\,dt
-(\X_0^m,\cchi_j)_{\B}\phi(0).
\]
From~\eqref{eq:inKm_ini} we have that $\X_0^m\to\X_0$ strongly in $\Hub$, hence
we can pass to the limit and obtain
\[
\int_0^T\left(\Y(t),\cchi_j\right)_{\B}\phi(t)\,dt
=-\int_0^T\left(\X(t),\phi'(t)\cchi_j\right)_{\B}\,dt-(\X_0,\cchi_j)_{\B}\phi(0).
\]
The above relation is valid for all finite linear combinations $\y$ of
$\cchi_j$ with $j=1,\dots,m$. Moreover, it depends linearly and continuously
on $\y\in\Ldb$; hence, it is valid for all $\y\in\Ldb$.
Taking $\phi\in{\mathscr D}(0,T)$ and integrating by parts, 
we get the following equation in the sense of distributions:
\[
\left(\frac{\partial\X}{\partial t}(t),\y\right)_{\B}=(\Y(t),\y)_{\B}
\quad\forall\y\in\Ldb.
\]
{\bf Step 5}: ${\partial\X}/{\partial t}(t)=\w(t)$.\\
We have (see~\eqref{eq:inKm})
\[
\left(\frac{\partial\X^m}{\partial t}(t)-\w^m(t),\cchi_i\right)_{\B}=0
\qquad\forall\cchi_i\ i=1,\dots,m.
\]
The convergence of $\w^m$ and of $\X^m$ obtained in~\eqref{eq:convw}
and~\eqref{eq:convX} implies that
\[
\Y=\w \in L^\infty(0,T;\Ldb),
\]
therefore the limits $\X$ and $\w$ satisfy the second equation
in~\eqref{eq:inK}.

\noindent
{\bf Step 6}: Passing to the limit in Equation~\eqref{eq:inKm}.\\
Let $\phi(t)$ be defined as before. We have:
\[
\int_0^T\left(\frac{\partial\u^m}{\partial t},\ppsi_j\right)\phi(t)\,dt=
-\int_0^T(\u^m(t),\ppsi_j\phi'(t))\,dt-(\u^m(0),\ppsi_j)\phi(0)
\]
and
\[
\aligned
&\int_0^T\left(\frac{\partial\w^m}{\partial t},\pphi_j(t)\right)_{\B}\phi(t)\,dt=
-\int_0^T(\w^m(t),\pphi_j(t)\phi'(t))_{\B}\,dt\\
&\qquad-\int_0^T(\w^m(t),\pphi_j'(t)\phi(t))_{\B}\,dt
-(\w^m(0),\pphi_j(0))_{\B}\phi(0).
\endaligned
\]
Using these relations in~\eqref{eq:inKm} we obtain
\[
\aligned
&-\rho_f\int_0^T(\u^m(t),\ppsi_j\phi'(t))\,dt+\int_0^Ta(\u^m(t),\ppsi_j\phi(t))\,dt\\
&\qquad
-\dr\int_0^T(\w^m(t),\pphi_j(t)\phi'(t))_{\B}\,dt
-\dr\int_0^T(\w^m(t),\frac{\partial\pphi_j}{\partial t}(t)\phi(t))_{\B}\,dt\\
&\qquad
+\kappa\int_0^T(\Grad_s\X^m(t),\Grad_s\pphi_j(t)\phi(t))_{\B}\,dt\\
&\quad=\rho_f(\u^m(0),\ppsi_j)\phi(0)+\dr(\w^m(0),\pphi_j(0))_{\B}\phi(0).
\endaligned
\]
For $j$ fixed, passing to the limit yields
\begin{equation}
\label{eq:i}
\aligned
&-\rho_f\int_0^T(\u(t),\ppsi_j\phi'(t))\,dt+\int_0^Ta(\u(t),\ppsi_j\phi(t))\,dt\\
&\qquad
-\dr\int_0^T(\w(t),\pphi_j(t)\phi'(t))_{\B}\,dt
-\dr\int_0^T(\w(t),\frac{\partial\pphi_j}{\partial t}(t)\phi(t))_{\B}\,dt\\
&\qquad
+\kappa\int_0^T(\Grad_s\X(t),\Grad_s\pphi_j(t)\phi(t))_{\B}\,dt\\
&\quad=\rho_f(\u_0,\ppsi_j)\phi(0)+\dr(\u_{s0},\pphi_j(0))_{\B}\phi(0).
\endaligned
\end{equation}
Each element $(\v,\z(t))\in\Kt^m$ can be written as 
\[
\aligned
&\v=\sum_{j=1}^m a_j\ppsi_j\\
&\z(t)=\sum_{j=1}^m a_j\pphi_j(t)=\sum_{j=1}^m a_j\ppsi_j\circ\Xb(t).
\endaligned
\]
Let us denote by $\z'(t)$ the time derivative of $\z(t)$. We have
\[
\z'(t)=\sum_{j=1}^m a_j\Grad\ppsi_j\circ\Xb(t)\frac{\partial\Xb}{\partial t}(t)
\]
which, due to the regularity of $\Xb$, is continuous from $[0,T]$ into $\Ldb$.

We write~\eqref{eq:i} as follows
\begin{equation}
\label{eq:ognivz}
\aligned
&-\rho_f\int_0^T(\u(t),\v\phi'(t))\,dt+\int_0^Ta(\u(t),\v\phi(t))\,dt\\
&\qquad
-\dr\int_0^T(\w(t),\z(t)\phi'(t))_{\B}\,dt
-\dr\int_0^T(\w(t),\z'(t)\phi(t))_{\B}\,dt\\
&\qquad
+\kappa\int_0^T(\Grad_s\X(t),\Grad_s\z(t)\phi(t))_{\B}\,dt\\
&\quad=\rho_f(\u_0,\v)\phi(0)+\dr(\u_{s0},\z(0))_{\B}\phi(0).
\endaligned
\end{equation}
All the terms depend linearly and continuously on $(\v,\z)\in C^1([0,T];\Kt^m)$,
hence for each $t\in[0,T]$ Equation~\eqref{eq:ognivz} holds true for all
$(\v,\z)\in C^1([0,T];\Kt)$.

Taking $\phi\in{\mathscr D}(0,T)$ and integrating by parts with respect
to $t$, we arrive at
\[
\aligned
&\rho_f\int_0^T\left(\frac{\partial\u}{\partial t}(t),\v\right)\phi(t)\,dt
+\int_0^Ta(\u(t),\v)\phi(t)\,dt\\
&\qquad
+\dr\int_0^T\left(\frac{\partial\w}{\partial t}(t),\z(t)\right)_{\B}\phi(t)\,dt
+\kappa\int_0^T(\Grad_s\X(t),\Grad_s\z(t))_{\B}\phi(t)\,dt=0,
\endaligned
\]
which implies that the first equation in~\eqref{eq:inK} holds true in the sense
of distributions on $(0,T)$. 

\medskip\noindent
{\bf Step 7}: Initial conditions.\\
It remains to check that $\u(0)=\u_0$, $\w(0)=\u_{s,0}$ and
$\X(0)=\X_0$.

Since $\frac{\partial\X}{\partial t}=\w\in L^2(0,T;\Hub)$, then $\X$ is
continuous from $[0,T]$ to $\Hub$, and we can pass to the limit
in~\eqref{eq:Xm} for $t=0$ arriving at $\X(0)=\X_0$.

We recall that
\[
(\u,\w)\in L^2(0,T;\V\times\Hub).
\]
Moreover, since $(\u(t),\w(t))\in\Kt$ and $\Kt\subset\V\times\Hub$, we have
that
\[
(\u,\w)\in L^2(0,T;\Kt). 
\]

In order to prove the continuity of $\u$ and $\w$ in the correct spaces
for the initial conditions, we can use a general interpolation theorem.
If we can show
\begin{equation}
\left(\frac{\partial\u}{\partial t},\frac{\partial\w}{\partial t}\right)\in
L^2(0,T;\V'\times\Hub')
\label{eq:HB}
\end{equation}
then it follows
\[
(\u,\w)\in C^0([0,T];\H\times\Ldb).
\]

Given $(\v,\z(t))\in\Kt$ the first equation in~\eqref{eq:inK} can be written
as
\[
\rho_f\left(\frac{\partial\u}{\partial t}(t),\v\right)+
\dr\left(\frac{\partial\w}{\partial t}(t),\z(t)\right)_{\B}
=-\langle A\u(t),\v\rangle-\kappa(\Grad_s\X(t),\Grad_s(\z(t)))_{\B},
\]
where $A:\V\to \V'$ is the linear and continuous operator associated with the
bilinear form $a$.
Since $\u\in L^2(0,T;\V)$, the function $A\u$ belongs to $L^2(0,T;\V')$.
Taking into account that $\X\in L^2(0,T;\Hub)$ we obtain also
\begin{equation}
\label{eq:secondo}
\int_0^T(\Grad_s\X(t),\Grad_s(\vcXb(t)))_{\B}\,dt\le
C \|\Grad_s\X(t)\|_{L^2(\Ldb)}\|\v\|_{\V}.
\end{equation}
It follows that
\[
\left(\frac{\partial\u}{\partial t},\frac{\partial\w}{\partial t}\right)\in
L^2(0,T;\Kt'),
\]
which, from Hahn--Banach theorem implies~\eqref{eq:HB}.

The general interpolation theory of Lions--Magenes~\cite{LM}
and~\cite[Lemma~1.2]{T} implies that $\u$ is continuous form $[0,T]$ to $\H$
and $\w$ is continuous from $[0,T]$ to $\Ldb$.  

It remains to check that $\u(0)=\u_0$ and $\w(0)=\u_{s0}$.
We multiply the first equation in~\eqref{eq:inK} by a scalar function
$\phi(t)$, continuously differentiable on $[0,T]$ with $\phi(T)=0$, and
integrate with respect to $t$
\[
\aligned
&\int_0^T\rho_f\frac d {dt}(\u(t),\v)\phi(t)\,dt 
+\int_0^T\dr\left(\frac{\partial\w}{\partial t}(t),\z(t)\right)_{\B}\phi(t)\,dt
\\
&+\int_0^Ta(\u(t),\v)\phi(t)\,dt
+\int_0^T\kappa(\Grad_s\X(t),\Grad_s\z(t))_{\B}\phi(t)\,dt=0.
\endaligned
\]
Integration by parts in the first two integrals gives for $(\v,\z(t))\in\Kt$
\[
\aligned
&\rho_f\int_0^T\frac d {dt}(\u(t),\v)\phi(t)\,dt 
+\dr\int_0^T
\left(\frac{\partial\w}{\partial t}(t),\z(t)\right)_{\B}\phi(t)\,dt\\
&\quad =-\rho_f\int_0^T(\u(t),\v)\phi'(t)\,dt
-\dr\int_0^T\left(\w(t),\z(t)\phi'(t)
+\frac{\partial\z}{\partial t}(t)\phi(t)\right)_{\B}\,dt\\
&\qquad -\rho_f(\u(0),\v)\phi(0)-\dr(\w(0),\z(0))_{\B}\phi(0),
\endaligned
\]
which inserted in the previous equation gives
\[
\aligned
&-\rho_f\int_0^T(\u(t),\v)\phi'(t)\,dt
-\dr\int_0^T\left(\w(t),\z(t)\phi'(t)
+\frac{\partial\z}{\partial t}(t)\phi(t)\right)_{\B}\,dt\\
&\qquad
+\int_0^Ta(\u(t),\v)\phi(t)\,dt
+\int_0^T\kappa(\Grad_s\X(t),\Grad_s\z(t))_{\B}\phi(t)\,dt\\
&\quad
=\rho_f(\u(0),\v)\phi(0)+\dr(\w(0),\z(0))_{\B}\phi(0).
\endaligned
\]
Comparing the last equality with~\eqref{eq:ognivz} gives for $\phi(0)\ne0$
\begin{equation}
\label{eq:initialcond}
\rho_f(\u(0)-\u_0,\v)+\dr(\w(0)-\u_{s0},\z(0))_{\B}=0
\end{equation}
for all $(\v,\z(0))\in\mathbb{K}_0$. Recalling that $\B=\Os_0$,
$\w(0)=\ucXb(0)$, and the
compatibility assumption~\eqref{eq:compatibility}, we can write the last
equation as
\[
\rho_f(\u(0)-\u_0,\v)_{\Of_0}+\rho_s(\u(0)-\u_{0},\v)_{\Os_0}=0,
\]
that is, with obvious notation,
$(\rho(\u(0)-\u_0),\v)_{\Omega}=0$ 
which gives that $\u(0)=\u_0$ in
$\Omega$. Substituting in~\eqref{eq:initialcond} we obtain also that
$\w(0)=\u_{s0}$ in $\B$.

\subsection{Uniqueness}
Let us assume that $(\u_1,\w_1,\X_1)$ and $(\u_2,\w_2,\X_2)$ are two solutions
of~\eqref{eq:inK}. Since the problem is linear the differences
$\hat\u=\u_1-\u_2$, $\hat\w=\w_1-\w_2$ and $\hat\X=\X^1-\X^2$ satisfy the
same equations as $\u,\w,\X$ with vanishing initial conditions, that is
\[
\aligned
&\rho_f\left(\frac{\partial\hat\u}{\partial t}(t),\v\right)+a(\hat\u(t),\v)
+\dr\left(\frac{\partial\hat\w}{\partial t}(t),\z(t)\right)_{\B}
\\
&\qquad 
+\kappa(\Grad_s\hat\X(t),\Grad_s\z(t))_{\B}=0&&\quad \forall(\v,\z(t))\in\Kt\\
&\left(\frac{\partial\hat\X}{\partial t}(t),\y\right)_{\B}=(\hat\w(t),\y)_{\B}
  &&\quad\forall\y\in\Ldb\\
&\hat\u(0)=0\quad\mbox{\rm in }\Omega,\quad \hat\w(0)=0 \quad\mbox{\rm in }\B,
\quad
\hat\X(0)=0\quad\mbox{\rm in }\B,
\endaligned
\]
We take $(\v,\z(t))=(\hat\u(t),\hat\w(t))$ in the first equation and use the
fact that $\hat\w(t)=\frac{\partial\hat\X}{\partial t}(t)$, so that we get
\begin{equation}
\label{eq:uniq1}
\aligned
&\rho_f\left(\frac{\partial\hat\u}{\partial t}(t),\hat\u(t)\right)
+a(\hat\u(t),\hat\u(t))
+\dr\left(\frac{\partial\hat\w}{\partial t}(t),\hat\w(t)\right)_{\B}
\\
&\qquad 
+\kappa
\left(\Grad_s\hat\X(t),\Grad_s\frac{\partial\hat\X}{\partial t}(t)\right)_{\B}=0.
\endaligned
\end{equation}
Thanks to~\eqref{eq:solu}, \eqref{eq:solw}, \eqref{eq:convX}, \eqref{eq:HB},
Step 5, and~\cite[Lemma III.1.2]{T}, we can write
\[
\aligned
&\left(\frac{\partial\hat\u}{\partial t}(t),\hat\u(t)\right)
=\frac12\frac{d}{dt}\|\hat\u(t)\|^2_\OO\\
&\left(\frac{\partial\hat\w}{\partial t}(t),\hat\w(t)\right)_{\B}
=\frac12\frac{d}{dt}\|\hat\w(t)\|^2_\OB\\
&\left(\Grad_s\hat\X(t),\Grad_s\frac{\partial\hat\X}{\partial t}(t)\right)_{\B}
=\frac12\frac{d}{dt}\|\Grad_s\hat\X(t)\|^2_\OB.
\endaligned
\]
Inserting these equalities into~\eqref{eq:uniq1} gives
\[
\frac{\rho_f}2\frac{d}{dt}\|\hat\u(t)\|^2_\OO+\mathbf{k}\|\Grad\hat\u(t)\|^2_\OO
+\frac{\dr}2\frac{d}{dt}\|\hat\w(t)\|^2_\OB
+\frac{\kappa}2\frac{d}{dt}\|\Grad_s\hat\X(t)\|^2_\OB\le0,
\]
which integrated from $0$ to $t$ implies
\[
\frac{\rho_f}2\|\hat\u(t)\|^2_\OO
+\frac{\dr}2\|\hat\w(t)\|^2_\OB
+\frac{\kappa}2\|\Grad_s\hat\X(t)\|^2_\OB\le0.
\]
Therefore $\u_1(t)=\u_2(t)$, $\w_1(t)=\w_2(t)$, and $\X_1(t)=\X_2(t)$ for all
$t$.
\section{Recovery of the pressure and of the Lagrange multiplier}
\label{se:pl}
In order to obtain existence and uniqueness of the solution of
Problem~\ref{pb:linI}, we need to show that starting from the solution
$(\u,\w,\X)$ of Problem~\ref{pb:inK}, we can define a Lagrange multiplier
$\llambda$ and a pressure $p$ so that $(\u,p,\X,\w,\llambda)$
satisfies~\eqref{eq:NS1_linI}-\eqref{eq:ciX_linI}.

\begin{proposition}
\label{pr:lambda}
Let $(\u,\w,\X)$ be the solution of Problem~\ref{pb:inK}, then there exists
$\llambda\in L^2(0,T;\Hub)$ such that for all $t\in(0,T)$
\begin{equation}
\label{eq:lambda}
\c(\llambda(t),\z)=
\dr\left(\frac{\partial\w}{\partial t}(t),\z\right)_{\B}
  +\kappa(\Grad_s\X(t),\Grad_s\z)_{\B}\qquad\forall\z\in\Hub.
\end{equation}

\end{proposition}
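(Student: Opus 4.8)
The plan is to read \eqref{eq:lambda} as a Riesz representation. As recorded right after \eqref{eq:defc}, the bilinear form $\c(\cdot,\cdot)$ is exactly the scalar product of $\Hub$ (it is symmetric, continuous, and $\c(\z,\z)=\|\z\|^2_{1,\B}$), so for each fixed $t$ the identity \eqref{eq:lambda} has one and only one solution $\llambda(t)\in\Hub$ as soon as the right-hand side
\[
G(t):\z\longmapsto\dr\Big(\frac{\partial\w}{\partial t}(t),\z\Big)_{\B}+\kappa\big(\Grad_s\X(t),\Grad_s\z\big)_{\B}
\]
is a bounded linear functional on $\Hub$, and then $\|\llambda(t)\|_{1,\B}=\|G(t)\|_{\Hub'}$. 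Hence the whole statement reduces to showing $G\in L^2(0,T;\Hub')$ together with strong measurability of $t\mapsto\llambda(t)$.

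First I would estimate the two contributions to $G(t)$ separately. The elastic term is harmless: $|(\Grad_s\X(t),\Grad_s\z)_\B|\le\|\X(t)\|_{1,\B}\|\z\|_{1,\B}$, and by Theorem~\ref{th:existence} one has $\X\in L^\infty(0,T;\Hub)$, so $t\mapsto\kappa\|\X(t)\|_{1,\B}$ belongs to $L^\infty(0,T)\subset L^2(0,T)$. The inertial term is the only point needing care, and here I would simply invoke the regularity $\frac{\partial\w}{\partial t}\in L^2(0,T;\Hub')$ already obtained in the course of the existence proof (see \eqref{eq:HB}, derived there by extending via Hahn--Banach the functional $(\v,\z)\mapsto\rho_f(\partial_t\u,\v)+\dr(\partial_t\w,\z)_\B\in L^2(0,T;\Kt')$ to all of $\V'\times\Hub'$). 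Granting this, the inertial term has dual norm $|\dr|\,\|\partial_t\w(t)\|_{\Hub'}\in L^2(0,T)$, and adding the two bounds gives $G\in L^2(0,T;\Hub')$.

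It then remains to package the pointwise Riesz representation into a statement about $L^2(0,T;\Hub)$. The map $t\mapsto G(t)$ is strongly measurable into $\Hub'$, being the sum of $t\mapsto\dr\,\partial_t\w(t)$ and of the image of the measurable map $t\mapsto\X(t)$ under a fixed bounded operator; composing with the Riesz isometry $\Hub'\to\Hub$, which is continuous, shows that $t\mapsto\llambda(t)$ is strongly measurable, with $\|\llambda\|_{L^2(0,T;\Hub)}=\|G\|_{L^2(0,T;\Hub')}<\infty$. Thus $\llambda\in L^2(0,T;\Hub)$ and, by construction, \eqref{eq:lambda} holds for a.e.\ $t\in(0,T)$.

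I do not expect a genuine obstacle here: once the dual-space regularity of $\partial_t\w$ from \eqref{eq:HB} is in hand, the proposition is essentially a one-line consequence of the Lax--Milgram/Riesz theorem applied to the coercive form $\c$. The point worth being careful about is that in \eqref{eq:lambda} the symbol $(\partial_t\w(t),\z)_\B$ must be read as the $\Hub'$--$\Hub$ duality pairing through the Gelfand triple $\Hub\subset\Ldb\subset\Hub'$. Should one prefer a self-contained derivation rather than citing \eqref{eq:HB}, the regularity $\partial_t\w\in L^2(0,T;\Hub')$ can be recovered from the first equation of \eqref{eq:inK} restricted to $\Kt$ together with the (trivial) surjectivity onto $\Hub'$ of $(\v,\z)\mapsto\c(\mmu,\vcXb-\z)$, which holds because the $\z$-component may be prescribed freely.
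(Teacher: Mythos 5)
Your proof is correct and follows essentially the same route as the paper: both read \eqref{eq:lambda} as a Riesz/Lax--Milgram problem for the scalar product $\c$ on $\Hub$, and both bound the right-hand side using $\frac{\partial\w}{\partial t}\in L^2(0,T;\Hub')$ from \eqref{eq:HB} together with the known regularity of $\X$. Your extra remarks on strong measurability and on interpreting the inertial term as the $\Hub'$--$\Hub$ duality pairing are correct refinements of the same argument, not a different approach.
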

\begin{proof}
Since $\c$ is equal to the scalar product in $\Hub$, it is enough to show that
the right hand side is a linear functional on $\Hub$. The linearity is obvious.
We check now that both terms are continuous.
Since $\frac{\partial\w}{\partial t}\in L^2(0,T;\Hub')$ and
$\X\in L^2(0,T;\Hub)$ we have
\[
\aligned
&\int_0^T\left(\frac{\partial\w}{\partial t}(t),\z\right)_{\B}\,dt\le 
\left\|\frac{\partial\w}{\partial t}\right\|_{L^2(\Hub')}\|\z\|_{\Hub}
\\
&\int_0^T (\Grad_s\X(t),\Grad_s\z)_{\B}\,dt\le\|\X\|_{L^2(\Hub)}\|\z\|_{\Hub}.
\endaligned
\]
These inequalities  imply that
the right hand side of~\eqref{eq:lambda} is a continuous functional on
$L^2(0,T;\Hub)$.
Therefore, from the Lax--Milgram lemma, we obtain existence and uniqueness of
the solution $\llambda\in L^2(0,T;\Hub)$.
\end{proof}
The above proposition allows us to split the first equation in~\eqref{eq:inK}
into two equations as follows:
\begin{equation}
\label{eq:split}
\aligned
&\rho_f\frac d {dt}(\u(t),\v)+a(\u(t),\v)+\c(\llambda(t),\vcXb(t))=0
&&\quad\forall\v\in\V\\
&\dr\left(\frac{\partial\w}{\partial t}(t),\z\right)_{\B}
+\kappa(\Grad_s\X(t),\Grad_s\z)_{\B}-\c(\llambda(t),\z)=0
&&\quad \forall\z\in\Hub.
\endaligned
\end{equation}
In order to obtain the solution of Problem~\ref{pb:linI}, it remains to show
the existence of $p$.
\begin{proposition}
\label{pr:p}
Let $(\u,\w,\X)$ and $\llambda$ be the solutions of Problem~\ref{pb:inK} and
of~\eqref{eq:lambda}. Then there exists a unique $p\in L^2(0,T;\Ldo)$ such that
$(\u,p,\X,\w,\llambda)$ is the solution of Problem~\ref{pb:linI}.
\end{proposition}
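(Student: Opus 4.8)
The plan is to recover $p$ from the momentum equation~\eqref{eq:NS1_linI} by de Rham's theorem, in the form of the inf-sup condition for the divergence on the bounded Lipschitz domain $\Omega$: $\div\colon\Huo\to\Ldo$ is onto with a bounded right inverse, so every $g\in\HH^{-1}(\Omega)$ that vanishes on $\V$ equals $\Grad\pi$ for a unique $\pi\in\Ldo$ with $\|\pi\|_{\Ldo}\le C\|g\|_{\HH^{-1}(\Omega)}$. First I would observe that almost all of Problem~\ref{pb:linI} is already available from Section~\ref{se:main} and Proposition~\ref{pr:lambda}: \eqref{eq:NS2_linI} is the incompressibility encoded in $\u\in L^2(0,T;\V)$; \eqref{eq:solid_linI} is the second line of~\eqref{eq:split}; \eqref{eq:vel_linI} is the second equation of~\eqref{eq:inK}; \eqref{eq:ode_linI} is the membership $(\u(t),\w(t))\in\Kt$ established in Step~3; and the initial conditions~\eqref{eq:ciu_linI}--\eqref{eq:ciX_linI} were verified in Steps~4 and~7. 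Hence only~\eqref{eq:NS1_linI}, i.e.\ the construction of $p$, is missing.

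Next I would write the residual of the momentum equation as an $\HH^{-1}(\Omega)$-valued object. For a.e.\ $t$ the linear form $\v\mapsto a(\u(t),\v)+\c(\llambda(t),\vcXb(t))$ is bounded on $\Huo$: the first term by $C\|\u(t)\|_{\Huo}\|\v\|_{\Huo}$, and the second because the change of variables $\x=\Xb(\s,t)$ is bi-Lipschitz with $\Jb\equiv 1$, so that $\|\vcXb(t)\|_{\Hub}\le C\|\v\|_{\Huo}$ and $|\c(\llambda(t),\vcXb(t))|\le C\|\llambda(t)\|_{\Hub}\|\v\|_{\Huo}$; thus this form belongs to $L^2(0,T;\HH^{-1}(\Omega))$ with norm controlled by $\|\u\|_{L^2(\Huo)}+\|\llambda\|_{L^2(\Hub)}$. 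To deal with the time-derivative term I would integrate the first line of~\eqref{eq:split} on $(0,t)$ --- legitimate since $\u\in C^0([0,T];\H)$ with $\u(0)=\u_0$ --- and define $G(t)\in\HH^{-1}(\Omega)$ by $\langle G(t),\v\rangle:=\rho_f(\u(t)-\u_0,\v)+\int_0^t\bigl(a(\u(\tau),\v)+\c(\llambda(\tau),\vcXb(\tau))\bigr)\,d\tau$ for $\v\in\Huo$. By the previous estimate and $\u\in C^0([0,T];\H)$, the map $t\mapsto G(t)$ is continuous into $\HH^{-1}(\Omega)$, and $\langle G(t),\v\rangle=0$ for all $\v\in\V$. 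De Rham then yields a unique $\Pi(t)\in\Ldo$ with $\langle G(t),\v\rangle=(\div\v,\Pi(t))$ for every $\v\in\Huo$, and continuity of the de Rham isomorphism gives $\Pi\in C^0([0,T];\Ldo)$.

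I would then recover $p$ by differentiating in time: for fixed $\v\in\Huo$ the scalar function $t\mapsto\langle G(t),\v\rangle$ has distributional derivative $\rho_f\frac{d}{dt}(\u(t),\v)+a(\u(t),\v)+\c(\llambda(t),\vcXb(t))$, while it also equals $(\div\v,\Pi(t))$; hence, setting $p:=\partial\Pi/\partial t$, Equation~\eqref{eq:NS1_linI} holds in the sense of distributions on $(0,T)$ for all $\v\in\Huo$. Together with the equations recalled in the first paragraph, $(\u,p,\X,\w,\llambda)$ solves Problem~\ref{pb:linI}. Uniqueness of $p$ follows because $\div$ maps $\Huo$ onto $\Ldo$: if $p_1,p_2$ are two admissible pressures then $(\div\v,p_1(t)-p_2(t))=0$ for all $\v\in\Huo$, hence $(q,p_1(t)-p_2(t))=0$ for all $q\in\Ldo$ and $p_1=p_2$.

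The main obstacle will be to upgrade $p$ from a $\Ldo$-valued distribution in time to a genuine function $p\in L^2(0,T;\Ldo)$, that is, to prove $\Pi\in H^1(0,T;\Ldo)$, equivalently $G\in H^1(0,T;\HH^{-1}(\Omega))$. Writing $G=G_1+G_2$ with $G_1(t)=\rho_f(\u(t)-\u_0,\cdot)$ and $G_2$ the time-integral term, the summand $G_2$ lies in $H^1(0,T;\HH^{-1}(\Omega))$ by the bound of the second paragraph. For $G_1$ one must exploit that $G(t)$ annihilates $\V$ for every $t$, so that $\partial G/\partial t$ takes values in the annihilator of $\V$ in $\HH^{-1}(\Omega)$ --- which is $\{\Grad q:q\in\Ldo\}\cong\Ldo$ --- and then combine this with $\partial\u/\partial t\in L^2(0,T;\V')$ (from~\eqref{eq:HB}) and the first line of~\eqref{eq:split} to conclude that this annihilator-valued derivative is square-integrable in time; the bounded inverse of the de Rham isomorphism transfers the estimate back to $\Pi$. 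Keeping straight which dual space each object lives in --- $\V'$ versus $\HH^{-1}(\Omega)$ versus $\Kt'$ --- and avoiding a circular use of the pressure itself, is where I expect the real difficulty to be.
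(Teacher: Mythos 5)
Your construction of $p$ rests on the same pillar as the paper's proof: the surjectivity of the divergence from $\Huo$ onto $\Ldo$ (closed range theorem / de Rham), applied to the momentum residual, which annihilates $\V$ precisely because of the first line of~\eqref{eq:split}; your inventory of the remaining equations of Problem~\ref{pb:linI} and your uniqueness argument also match the paper. The difference is in how the time derivative is handled. The paper applies the closed range theorem directly to $\ell(t)(\v)=\rho_f\frac{d}{dt}(\u(t),\v)+a(\u(t),\v)+\c(\llambda(t),\vcXb(t))$ and writes a continuity bound in terms of $\|\partial\u/\partial t\|_{L^2(H^{-1}(\Omega))}$, concluding $p\in L^2(0,T;\Ldo)$ in one step. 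You instead integrate in time first (Temam's classical device), obtain a primitive $\Pi\in C^0([0,T];\Ldo)$ using only $\u\in C^0([0,T];\H)$, and set $p=\partial\Pi/\partial t$ as a distribution; this is safer but defers the real issue to the final upgrade $\Pi\in H^1(0,T;\Ldo)$. The obstacle you single out there is genuine, and it is exactly the point at which the paper's own proof is terse: \eqref{eq:HB} only establishes $\partial\u/\partial t\in L^2(0,T;\V')$, i.e.\ control of $\frac{d}{dt}(\u,\v)$ for divergence-free $\v$, whereas the bound on $\ell$ over all of $\Huo$ presupposes $\partial\u/\partial t\in L^2(0,T;\HH^{-1}(\Omega))$ --- and separating $\partial\u/\partial t$ from $\Grad p$ inside $\HH^{-1}(\Omega)$ is essentially what one is trying to prove, so your warning about circularity is well placed. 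In short: same approach, executed more cautiously; the one step you leave as a sketch is the one step the paper also does not spell out, so completing your plan would yield a more careful version of the published argument rather than a different one.
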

\begin{proof}
The existence and uniqueness of $(\u,\w,\X)$ and $\llambda$ are stated in
Theorem~\ref{th:existence} and in Proposition~\ref{pr:lambda}, respectively.
The pressure $p$ can be obtained as the solution of the following equation
\begin{equation}
\label{eq:defp}
(p(t),\div\v)=\rho_f\frac d {dt}(\u(t),\v)+a(\u(t),\v)+\c(\llambda(t),\vcXb(t))
\quad\forall\v\in\Huo.
\end{equation}
In order to see that this problem defines a function $p(t)$ satisfying the
required regularity, we can use standard arguments originating from the Banach
closed range theorem (see, for instance~\cite[Theorem~4.1.4]{bbf}). We need to
show that the right-hand side of~\eqref{eq:defp} is a linear and continuous
functional on $\Huo$ belonging to the polar set of the kernel of the
divergence operator in $\Huo$. Let us denote the right-hand side
of~\eqref{eq:defp} by $\ell(\v)$; the continuity of $\ell$ can be shown as
follows
\[
\aligned
&\left\vert\int_0^T\ell(\v)\,dt\right\vert\le
\int_0^T|\ell(\v)|\,dt\\
&\qquad\le
C\left(\left\|\frac{\partial\u}{\partial t}\right\|_{L^2(H^{-1}(\Omega))}
+\|\u\|_{L^2(\Huo)}+\|\llambda\|_{L^2(\Hub)}\right)\|\v\|_{\Huo}.
\endaligned
\]
Moreover, it is clear that $\ell$ belongs to the polar set of the kernel of the
divergence operator in $\Huo$: this is exactly what is stated in the first
equation of~\eqref{eq:split}.

From the closed range theorem, it follows that there exists $p(t)$
satisfying~\eqref{eq:defp} such that
\[
\|p\|_{L^2(\Ldo)}\le(1/\beta)\|\ell\|_{L^2(H^{-1}(\Omega))},
\]
where $\beta$ is the inf-sup constant associated with the divergence operator
in $\Huo$ (see~\cite[(I.1.51) and Prop.~I.1.2]{T}).

\end{proof}  
\section*{Acknowledgments}
The authors are members of the INdAM Research group GNCS and their research
is partially supported by IMATI/CNR and by PRIN/MIUR.

\bibliographystyle{plain}
\bibliography{ref}

\end{document}